
\documentclass{amsproc}
\usepackage{amsfonts}
\usepackage[utf8]{inputenc}
\usepackage[T1]{fontenc}

\setcounter{MaxMatrixCols}{10}

\theoremstyle{plain}
\newtheorem{acknowledgement}{Acknowledgement}

\newtheorem{axiom}{Axiom}

\newtheorem{conjecture}{Conjecture}
\newtheorem{corollary}{Corollary}

\newtheorem{definition}{Definition}
\newtheorem{example}{Example}
\newtheorem{exercise}{Exercise}
\newtheorem{lemma}{Lemma}

\newtheorem{proposition}{Proposition}
\newtheorem{remark}{Remark}

\newtheorem{theorem}{Theorem}
\typeout{TCILATEX Macros for Scientific Word 2.5 <22 Dec 95>.}
\typeout{NOTICE:  This macro file is NOT proprietary and may be 
freely copied and distributed.}
\makeatletter
%
\newcount\@hour\newcount\@minute\chardef\@x10\chardef\@xv60
\def\tcitime{
\def\@time{%
  \@minute\time\@hour\@minute\divide\@hour\@xv
  \ifnum\@hour<\@x 0\fi\the\@hour:%
  \multiply\@hour\@xv\advance\@minute-\@hour
  \ifnum\@minute<\@x 0\fi\the\@minute
  }}%

\@ifundefined{hyperref}{}{}

\@ifundefined{qExtProgCall}{\def\qExtProgCall#1#2#3#4#5#6{\relax}}{}
%
%
%
%
\def\QCTOpt[#1]#2{%
  \def\QCTOptB{#1}
  \def\QCTOptA{#2}
}
\def\QCTNOpt#1{%
  \def\QCTOptA{#1}
  \let\QCTOptB\empty
}
\def\Qct{%
  \@ifnextchar[{%
    \QCTOpt}{\QCTNOpt}
}
\def\QCBOpt[#1]#2{%
  \def\QCBOptB{#1}
  \def\QCBOptA{#2}
}
\def\QCBNOpt#1{%
  \def\QCBOptA{#1}
  \let\QCBOptB\empty
}
\def\Qcb{%
  \@ifnextchar[{%
    \QCBOpt}{\QCBNOpt}
}
\def\PrepCapArgs{%
  \ifx\QCBOptA\empty
    \ifx\QCTOptA\empty
      {}%
    \else
      \ifx\QCTOptB\empty
        {\QCTOptA}%
      \else
        [\QCTOptB]{\QCTOptA}%
      \fi
    \fi
  \else
    \ifx\QCBOptA\empty
      {}%
    \else
      \ifx\QCBOptB\empty
        {\QCBOptA}%
      \else
        [\QCBOptB]{\QCBOptA}%
      \fi
    \fi
  \fi
}
\newcount\GRAPHICSTYPE
\GRAPHICSTYPE=\z@
\def\GRAPHICSPS#1{%
 \ifcase\GRAPHICSTYPE
   \special{ps: #1}%
 \or
   \special{language "PS", include "#1"}%
 \fi
}%
%
%
%
\def\graffile#1#2#3#4{%
    \leavevmode
    \raise -#4 \BOXTHEFRAME{%
        \hbox to #2{\raise #3\hbox to #2{\null #1\hfil}}}%
}%
%
\def\draftbox#1#2#3#4{%
 \leavevmode\raise -#4 \hbox{%
  \frame{\rlap{\protect\tiny #1}\hbox to #2%
   {\vrule height#3 width\z@ depth\z@\hfil}%
  }%
 }%
}%
\newcount\draft
\draft=\z@

\newif\ifwasdraft
\wasdraftfalse

\def\GRAPHIC#1#2#3#4#5{%
 \ifnum\draft=\@ne\draftbox{#2}{#3}{#4}{#5}%
  \else\graffile{#1}{#3}{#4}{#5}%
  \fi
 }%
\def\addtoLaTeXparams#1{%
    \edef\LaTeXparams{\LaTeXparams #1}}%
%

\newif\ifBoxFrame \BoxFramefalse
\newif\ifOverFrame \OverFramefalse
\newif\ifUnderFrame \UnderFramefalse

\def\BOXTHEFRAME#1{%
   \hbox{%
      \ifBoxFrame
         \frame{#1}%
      \else
         {#1}%
      \fi
   }%
}

\def\doFRAMEparams#1{\BoxFramefalse\OverFramefalse\UnderFramefalse\readFRAMEparams#1\end}%
\def\readFRAMEparams#1{%
 \ifx#1\end%
  \let\next=\relax
  \else
  \ifx#1i\dispkind=\z@\fi
  \ifx#1d\dispkind=\@ne\fi
  \ifx#1f\dispkind=\tw@\fi
  \ifx#1t\addtoLaTeXparams{t}\fi
  \ifx#1b\addtoLaTeXparams{b}\fi
  \ifx#1p\addtoLaTeXparams{p}\fi
  \ifx#1h\addtoLaTeXparams{h}\fi
  \ifx#1X\BoxFrametrue\fi
  \ifx#1O\OverFrametrue\fi
  \ifx#1U\UnderFrametrue\fi
  \ifx#1w
    \ifnum\draft=1\wasdrafttrue\else\wasdraftfalse\fi
    \draft=\@ne
  \fi
  \let\next=\readFRAMEparams
  \fi
 \next
 }%
%

\def\IFRAME#1#2#3#4#5#6{%
      \bgroup
      \let\QCTOptA\empty
      \let\QCTOptB\empty
      \let\QCBOptA\empty
      \let\QCBOptB\empty
      #6%
      \parindent=0pt%
      \leftskip=0pt
      \rightskip=0pt
      \setbox0 = \hbox{\QCBOptA}%
      \@tempdima = #1\relax
      \ifOverFrame
          \typeout{This is not implemented yet}%
          \show\HELP
      \else
         \ifdim\wd0>\@tempdima
            \advance\@tempdima by \@tempdima
            \ifdim\wd0 >\@tempdima
               \textwidth=\@tempdima
               \setbox1 =\vbox{%
                  \noindent\hbox to \@tempdima{\hfill\GRAPHIC{#5}{#4}{#1}{#2}{#3}\hfill}\\%
                  \noindent\hbox to \@tempdima{\parbox[b]{\@tempdima}{\QCBOptA}}%
               }%
               \wd1=\@tempdima
            \else
               \textwidth=\wd0
               \setbox1 =\vbox{%
                 \noindent\hbox to \wd0{\hfill\GRAPHIC{#5}{#4}{#1}{#2}{#3}\hfill}\\%
                 \noindent\hbox{\QCBOptA}%
               }%
               \wd1=\wd0
            \fi
         \else
            \ifdim\wd0>0pt
              \hsize=\@tempdima
              \setbox1 =\vbox{%
                \unskip\GRAPHIC{#5}{#4}{#1}{#2}{0pt}%
                \break
                \unskip\hbox to \@tempdima{\hfill \QCBOptA\hfill}%
              }%
              \wd1=\@tempdima
           \else
              \hsize=\@tempdima
              \setbox1 =\vbox{%
                \unskip\GRAPHIC{#5}{#4}{#1}{#2}{0pt}%
              }%
              \wd1=\@tempdima
           \fi
         \fi
         \@tempdimb=\ht1
         \advance\@tempdimb by \dp1
         \advance\@tempdimb by -#2%
         \advance\@tempdimb by #3%
         \leavevmode
         \raise -\@tempdimb \hbox{\box1}%
      \fi
      \egroup%
}%
%
\def\DFRAME#1#2#3#4#5{%
 \begin{center}
     \let\QCTOptA\empty
     \let\QCTOptB\empty
     \let\QCBOptA\empty
     \let\QCBOptB\empty
     \ifOverFrame 
        #5\QCTOptA\par
     \fi
     \GRAPHIC{#4}{#3}{#1}{#2}{\z@}
     \ifUnderFrame 
        \nobreak\par #5\QCBOptA
     \fi
 \end{center}%
 }%
%
\def\FFRAME#1#2#3#4#5#6#7{%
 \begin{figure}[#1]%
  \let\QCTOptA\empty
  \let\QCTOptB\empty
  \let\QCBOptA\empty
  \let\QCBOptB\empty
  \ifOverFrame
    #4
    \ifx\QCTOptA\empty
    \else
      \ifx\QCTOptB\empty
        \caption{\QCTOptA}%
      \else
        \caption[\QCTOptB]{\QCTOptA}%
      \fi
    \fi
    \ifUnderFrame\else
      \label{#5}%
    \fi
  \else
    \UnderFrametrue%
  \fi
  \begin{center}\GRAPHIC{#7}{#6}{#2}{#3}{\z@}\end{center}%
  \ifUnderFrame
    #4
    \ifx\QCBOptA\empty
      \caption{}%
    \else
      \ifx\QCBOptB\empty
        \caption{\QCBOptA}%
      \else
        \caption[\QCBOptB]{\QCBOptA}%
      \fi
    \fi
    \label{#5}%
  \fi
  \end{figure}%
 }%
%
%
%
%
%
\newcount\dispkind%

\def\makeactives{
  \catcode`\"=\active
  \catcode`\;=\active
  \catcode`\:=\active
  \catcode`\'=\active
  \catcode`\~=\active
}
\bgroup
   \makeactives
   \gdef\activesoff{%
      \def"{\string"}
      \def;{\string;}
      \def:{\string:}
      \def'{\string'}
      \def~{\string~}
    }
\egroup

\def\FRAME#1#2#3#4#5#6#7#8{%
 \bgroup
 \@ifundefined{bbl@deactivate}{}{\activesoff}
 \ifnum\draft=\@ne
   \wasdrafttrue
 \else
   \wasdraftfalse%
 \fi
 \def\LaTeXparams{}%
 \dispkind=\z@
 \def\LaTeXparams{}%
 \doFRAMEparams{#1}%
 \ifnum\dispkind=\z@\IFRAME{#2}{#3}{#4}{#7}{#8}{#5}\else
  \ifnum\dispkind=\@ne\DFRAME{#2}{#3}{#7}{#8}{#5}\else
   \ifnum\dispkind=\tw@
    \edef\@tempa{\noexpand\FFRAME{\LaTeXparams}}%
    \@tempa{#2}{#3}{#5}{#6}{#7}{#8}%
    \fi
   \fi
  \fi
  \ifwasdraft\draft=1\else\draft=0\fi{}%
  \egroup
 }%
%

\def\TEXUX#1{"texux"}

%
%
%
%
%
%
%
%
\def\limfunc#1{\mathop{\rm #1}}%

%
\long\def\QQQ#1#2{%
     \long\expandafter\def\csname#1\endcsname{#2}}%
\@ifundefined{QTP}{\def\QTP#1{}}{}
\@ifundefined{QEXCLUDE}{\def\QEXCLUDE#1{}}{}
\@ifundefined{Qlb}{}{}
\@ifundefined{Qlt}{}{}
\long\def\QQA#1#2{}%
\def\QTR#1#2{{\csname#1\endcsname #2}}
\def\EXPAND#1[#2]#3{}%
\def\NOEXPAND#1[#2]#3{}%
\def\LaTeXparent#1{}%
\def\ChildStyles#1{}%
\def\ChildDefaults#1{}%
\def\QTagDef#1#2#3{}%
%
\@ifundefined{StyleEditBeginDoc}{}{}
%
\def\QQfnmark#1{\footnotemark}

%
\def\makeatletter\input gnuindex.sty\makeatother\makeindex{\makeatletter\input gnuindex.sty\makeatother\makeindex}%
\@ifundefined{INDEX}{\def\INDEX#1#2{}{}}{}%
\@ifundefined{SUBINDEX}{\def\SUBINDEX#1#2#3{}{}{}}{}%
\@ifundefined{initial}%
   {\def\initial#1{\bigbreak{\raggedright\large\bf #1}\kern 2\p@\penalty3000}}%
   {}%
\@ifundefined{entry}{}{}%
\@ifundefined{primary}{}{}%
\@ifundefined{secondary}{}{}%
\@ifundefined{ZZZ}{}{\makeatletter\input gnuindex.sty\makeatother\makeindex\makeatletter}%
%
\@ifundefined{abstract}{%
 \def\abstract{%
  \if@twocolumn
   \section*{Abstract (Not appropriate in this style!)}%
   \else \small 
   \begin{center}{\bf Abstract\vspace{-.5em}\vspace{\z@}}\end{center}%
   \quotation 
   \fi
  }%
 }{%
 }%
\@ifundefined{endabstract}{\def\endabstract
  {\if@twocolumn\else\endquotation\fi}}{}%
\@ifundefined{maketitle}{\def\maketitle#1{}}{}%
\@ifundefined{affiliation}{\def\affiliation#1{}}{}%
\@ifundefined{proof}{}{}%
\@ifundefined{endproof}{}{}%
\@ifundefined{newfield}{\def\newfield#1#2{}}{}%
\@ifundefined{chapter}{\def\chapter#1{\par(Chapter head:)#1\par }%
 \newcount\c@chapter}{}%
\@ifundefined{part}{\def\part#1{\par(Part head:)#1\par }}{}%
\@ifundefined{section}{\def\section#1{\par(Section head:)#1\par }}{}%
\@ifundefined{subsection}{\def\subsection#1%
 {\par(Subsection head:)#1\par }}{}%
\@ifundefined{subsubsection}{\def\subsubsection#1%
 {\par(Subsubsection head:)#1\par }}{}%
\@ifundefined{paragraph}{\def\paragraph#1%
 {\par(Subsubsubsection head:)#1\par }}{}%
\@ifundefined{subparagraph}{\def\subparagraph#1%
 {\par(Subsubsubsubsection head:)#1\par }}{}%
\@ifundefined{therefore}{}{}%
\@ifundefined{backepsilon}{}{}%
\@ifundefined{yen}{}{}%
\@ifundefined{registered}{%
   \def\registered{\relax\ifmmode{}\r@gistered
                    \else$\m@th\r@gistered$\fi}%
 \def\r@gistered{^{\ooalign
  {\hfil\raise.07ex\hbox{$\scriptstyle\rm\text{R}$}\hfil\crcr
  \mathhexbox20D}}}}{}%
\@ifundefined{Eth}{}{}%
\@ifundefined{eth}{}{}%
\@ifundefined{Thorn}{}{}%
\@ifundefined{thorn}{}{}%
%
\@ifundefined{degree}{}{}%
%
\newdimen\theight
\def\Column{%
 \vadjust{\setbox\z@=\hbox{\scriptsize\quad\quad tcol}%
  \theight=\ht\z@\advance\theight by \dp\z@\advance\theight by \lineskip
  \kern -\theight \vbox to \theight{%
   \rightline{\rlap{\box\z@}}%
   \vss
   }%
  }%
 }%
\def\qed{%
 \ifhmode\unskip\nobreak\fi\ifmmode\ifinner\else\hskip5\p@\fi\fi
 \hbox{\hskip5\p@\vrule width4\p@ height6\p@ depth1.5\p@\hskip\p@}%
 }%
\def\miss{\hbox{\vrule height2\p@ width 2\p@ depth\z@}}%
%
%
\def\tcol#1{{\baselineskip=6\p@ \vcenter{#1}} \Column}  %
%
%
%
%
%

\def\newfmtname{LaTeX2e}
\def\chkcompat{%
   \if@compatibility
   \else
     \usepackage{latexsym}
   \fi
}

\ifx\fmtname\newfmtname
  \DeclareOldFontCommand{\rm}{\normalfont\rmfamily}{\mathrm}
  \DeclareOldFontCommand{\sf}{\normalfont\sffamily}{\mathsf}
  \DeclareOldFontCommand{\tt}{\normalfont\ttfamily}{\mathtt}
  \DeclareOldFontCommand{\bf}{\normalfont\bfseries}{\mathbf}
  \DeclareOldFontCommand{\it}{\normalfont\itshape}{\mathit}
  \DeclareOldFontCommand{\sl}{\normalfont\slshape}{\@nomath\sl}
  \DeclareOldFontCommand{\sc}{\normalfont\scshape}{\@nomath\sc}
  \chkcompat
\fi

%

\def\alpha{\Greekmath 010B }%
\def\beta{\Greekmath 010C }%
\def\gamma{\Greekmath 010D }%
\def\delta{\Greekmath 010E }%
\def\epsilon{\Greekmath 010F }%
\def\zeta{\Greekmath 0110 }%
\def\eta{\Greekmath 0111 }%
\def\theta{\Greekmath 0112 }%
\def\iota{\Greekmath 0113 }%
\def\kappa{\Greekmath 0114 }%
\def\lambda{\Greekmath 0115 }%
\def\mu{\Greekmath 0116 }%
\def\nu{\Greekmath 0117 }%
\def\xi{\Greekmath 0118 }%
\def\pi{\Greekmath 0119 }%
\def\rho{\Greekmath 011A }%
\def\sigma{\Greekmath 011B }%
\def\tau{\Greekmath 011C }%
\def\upsilon{\Greekmath 011D }%
\def\phi{\Greekmath 011E }%
\def\chi{\Greekmath 011F }%
\def\psi{\Greekmath 0120 }%
\def\omega{\Greekmath 0121 }%
\def\varepsilon{\Greekmath 0122 }%
\def\vartheta{\Greekmath 0123 }%
\def\varpi{\Greekmath 0124 }%
\def\varrho{\Greekmath 0125 }%
\def\varsigma{\Greekmath 0126 }%
\def\varphi{\Greekmath 0127 }%

\def\nabla{\Greekmath 0272 }
\def\FindBoldGroup{%
   {\setbox0=\hbox{$\mathbf{x\global\edef\theboldgroup{\the\mathgroup}}$}}%
}

\def\Greekmath#1#2#3#4{%
    \if@compatibility
        \ifnum\mathgroup=\symbold
           \mathchoice{\mbox{\boldmath$\displaystyle\mathchar"#1#2#3#4$}}%
                      {\mbox{\boldmath$\textstyle\mathchar"#1#2#3#4$}}%
                      {\mbox{\boldmath$\scriptstyle\mathchar"#1#2#3#4$}}%
                      {\mbox{\boldmath$\scriptscriptstyle\mathchar"#1#2#3#4$}}%
        \else
           \mathchar"#1#2#3#4%
        \fi 
    \else 
        \FindBoldGroup
        \ifnum\mathgroup=\theboldgroup 
           \mathchoice{\mbox{\boldmath$\displaystyle\mathchar"#1#2#3#4$}}%
                      {\mbox{\boldmath$\textstyle\mathchar"#1#2#3#4$}}%
                      {\mbox{\boldmath$\scriptstyle\mathchar"#1#2#3#4$}}%
                      {\mbox{\boldmath$\scriptscriptstyle\mathchar"#1#2#3#4$}}%
        \else
           \mathchar"#1#2#3#4%
        \fi     	    
	  \fi}

\newif\ifGreekBold  \GreekBoldfalse
\let\SAVEPBF=\pbf
\def\pbf{\GreekBoldtrue\SAVEPBF}%

\@ifundefined{theorem}{\newtheorem{theorem}{Theorem}}{}
\@ifundefined{lemma}{\newtheorem{lemma}[theorem]{Lemma}}{}
\@ifundefined{corollary}{\newtheorem{corollary}[theorem]{Corollary}}{}
\@ifundefined{conjecture}{}{}
\@ifundefined{proposition}{\newtheorem{proposition}[theorem]{Proposition}}{}
\@ifundefined{axiom}{}{}
\@ifundefined{remark}{\newtheorem{remark}{Remark}}{}
\@ifundefined{example}{\newtheorem{example}{Example}}{}
\@ifundefined{exercise}{}{}
\@ifundefined{definition}{\newtheorem{definition}{Definition}}{}

\@ifundefined{mathletters}{%
  \newcounter{equationnumber}  
  \def\mathletters{%
     \addtocounter{equation}{1}
     \edef\@currentlabel{\theequation}%
     \setcounter{equationnumber}{\c@equation}
     \setcounter{equation}{0}%
     \edef\theequation{\@currentlabel\noexpand\alph{equation}}%
  }
  
}{}

\@ifundefined{BibTeX}{%
    \def\BibTeX{{\rm B\kern-.05em{\sc i\kern-.025em b}\kern-.08em
                 T\kern-.1667em\lower.7ex\hbox{E}\kern-.125emX}}}{}%
\@ifundefined{AmS}%
    {\def\AmS{{\protect\usefont{OMS}{cmsy}{m}{n}%
                A\kern-.1667em\lower.5ex\hbox{M}\kern-.125emS}}}{}%
\@ifundefined{AmSTeX}{}{}%
%

%
%
\ifx\ds@amstex\relax
   \message{amstex already loaded}\makeatother 
\else
   \@ifpackageloaded{amstex}%
      {\message{amstex already loaded}\makeatother }
      {}
   \@ifpackageloaded{amsgen}%
      {\message{amsgen already loaded}\makeatother }
      {}
\fi
%
%
%
%
\let\DOTSI\relax
\def\RIfM@{\relax\ifmmode}%
\def\FN@{\futurelet\next}%
\newcount\intno@
\def\iint{\DOTSI\intno@\tw@\FN@\ints@}%
\def\iiint{\DOTSI\intno@\thr@@\FN@\ints@}%
\def\iiiint{\DOTSI\intno@4 \FN@\ints@}%
\def\idotsint{\DOTSI\intno@\z@\FN@\ints@}%
\def\ints@{\findlimits@\ints@@}%
\newif\iflimtoken@
\newif\iflimits@
\def\findlimits@{\limtoken@true\ifx\next\limits\limits@true
 \else\ifx\next\nolimits\limits@false\else
 \limtoken@false\ifx\ilimits@\nolimits\limits@false\else
 \ifinner\limits@false\else\limits@true\fi\fi\fi\fi}%
\def\multint@{\int\ifnum\intno@=\z@\intdots@                          
 \else\intkern@\fi                                                    
 \ifnum\intno@>\tw@\int\intkern@\fi                                   
 \ifnum\intno@>\thr@@\int\intkern@\fi                                 
 \int}
\def\multintlimits@{\intop\ifnum\intno@=\z@\intdots@\else\intkern@\fi
 \ifnum\intno@>\tw@\intop\intkern@\fi
 \ifnum\intno@>\thr@@\intop\intkern@\fi\intop}%
\def\intic@{%
    \mathchoice{\hskip.5em}{\hskip.4em}{\hskip.4em}{\hskip.4em}}%
\def\negintic@{\mathchoice
 {\hskip-.5em}{\hskip-.4em}{\hskip-.4em}{\hskip-.4em}}%
\def\ints@@{\iflimtoken@                                              
 \def\ints@@@{\iflimits@\negintic@
   \mathop{\intic@\multintlimits@}\limits                             
  \else\multint@\nolimits\fi                                          
  \eat@}
 \else                                                                
 \def\ints@@@{\iflimits@\negintic@
  \mathop{\intic@\multintlimits@}\limits\else
  \multint@\nolimits\fi}\fi\ints@@@}%
\def\intkern@{\mathchoice{\!\!\!}{\!\!}{\!\!}{\!\!}}%
\def\plaincdots@{\mathinner{\cdotp\cdotp\cdotp}}%
\def\intdots@{\mathchoice{\plaincdots@}%
 {{\cdotp}\mkern1.5mu{\cdotp}\mkern1.5mu{\cdotp}}%
 {{\cdotp}\mkern1mu{\cdotp}\mkern1mu{\cdotp}}%
 {{\cdotp}\mkern1mu{\cdotp}\mkern1mu{\cdotp}}}%
%
%
%
\def\RIfM@{\relax\protect\ifmmode}
\def\text{\RIfM@\expandafter\text@\else\expandafter\mbox\fi}
\let\nfss@text\text
\def\text@#1{\mathchoice
   {\textdef@\displaystyle\f@size{#1}}%
   {\textdef@\textstyle\tf@size{\firstchoice@false #1}}%
   {\textdef@\textstyle\sf@size{\firstchoice@false #1}}%
   {\textdef@\textstyle \ssf@size{\firstchoice@false #1}}%
   \glb@settings}

\def\textdef@#1#2#3{\hbox{{%
                    \everymath{#1}%
                    \let\f@size#2\selectfont
                    #3}}}
\newif\iffirstchoice@
\firstchoice@true
%
%
%
%
%
\def\Let@{\relax\iffalse{\fi\let\\=\cr\iffalse}\fi}%
\def\vspace@{\def\vspace##1{\crcr\noalign{\vskip##1\relax}}}%
\def\multilimits@{\bgroup\vspace@\Let@
 \baselineskip\fontdimen10 \scriptfont\tw@
 \advance\baselineskip\fontdimen12 \scriptfont\tw@
 \lineskip\thr@@\fontdimen8 \scriptfont\thr@@
 \lineskiplimit\lineskip
 \vbox\bgroup\ialign\bgroup\hfil$\m@th\scriptstyle{##}$\hfil\crcr}%
\def\Sb{_\multilimits@}%
\def\endSb{\crcr\egroup\egroup\egroup}%
\def\Sp{^\multilimits@}%

%
%
%
\newdimen\ex@
\ex@.2326ex
\def\rightarrowfill@#1{$#1\m@th\mathord-\mkern-6mu\cleaders
 \hbox{$#1\mkern-2mu\mathord-\mkern-2mu$}\hfill
 \mkern-6mu\mathord\rightarrow$}%
\def\leftarrowfill@#1{$#1\m@th\mathord\leftarrow\mkern-6mu\cleaders
 \hbox{$#1\mkern-2mu\mathord-\mkern-2mu$}\hfill\mkern-6mu\mathord-$}%
\def\leftrightarrowfill@#1{$#1\m@th\mathord\leftarrow
\mkern-6mu\cleaders
 \hbox{$#1\mkern-2mu\mathord-\mkern-2mu$}\hfill
 \mkern-6mu\mathord\rightarrow$}%
\def\overrightarrow{\mathpalette\overrightarrow@}%
\def\overrightarrow@#1#2{\vbox{\ialign{##\crcr\rightarrowfill@#1\crcr
 \noalign{\kern-\ex@\nointerlineskip}$\m@th\hfil#1#2\hfil$\crcr}}}%

\def\overleftarrow{\mathpalette\overleftarrow@}%
\def\overleftarrow@#1#2{\vbox{\ialign{##\crcr\leftarrowfill@#1\crcr
 \noalign{\kern-\ex@\nointerlineskip}$\m@th\hfil#1#2\hfil$\crcr}}}%
\def\overleftrightarrow{\mathpalette\overleftrightarrow@}%
\def\overleftrightarrow@#1#2{\vbox{\ialign{##\crcr
   \leftrightarrowfill@#1\crcr
 \noalign{\kern-\ex@\nointerlineskip}$\m@th\hfil#1#2\hfil$\crcr}}}%
\def\underrightarrow{\mathpalette\underrightarrow@}%
\def\underrightarrow@#1#2{\vtop{\ialign{##\crcr$\m@th\hfil#1#2\hfil
  $\crcr\noalign{\nointerlineskip}\rightarrowfill@#1\crcr}}}%

\def\underleftarrow{\mathpalette\underleftarrow@}%
\def\underleftarrow@#1#2{\vtop{\ialign{##\crcr$\m@th\hfil#1#2\hfil
  $\crcr\noalign{\nointerlineskip}\leftarrowfill@#1\crcr}}}%
\def\underleftrightarrow{\mathpalette\underleftrightarrow@}%
\def\underleftrightarrow@#1#2{\vtop{\ialign{##\crcr$\m@th
  \hfil#1#2\hfil$\crcr
 \noalign{\nointerlineskip}\leftrightarrowfill@#1\crcr}}}%


\def\qopnamewl@#1{\mathop{\operator@font#1}\nlimits@}
\let\nlimits@\displaylimits
\def\setboxz@h{\setbox\z@\hbox}

\def\varlim@#1#2{\mathop{\vtop{\ialign{##\crcr
 \hfil$#1\m@th\operator@font lim$\hfil\crcr
 \noalign{\nointerlineskip}#2#1\crcr
 \noalign{\nointerlineskip\kern-\ex@}\crcr}}}}

 \def\rightarrowfill@#1{\m@th\setboxz@h{$#1-$}\ht\z@\z@
  $#1\copy\z@\mkern-6mu\cleaders
  \hbox{$#1\mkern-2mu\box\z@\mkern-2mu$}\hfill
  \mkern-6mu\mathord\rightarrow$}
\def\leftarrowfill@#1{\m@th\setboxz@h{$#1-$}\ht\z@\z@
  $#1\mathord\leftarrow\mkern-6mu\cleaders
  \hbox{$#1\mkern-2mu\copy\z@\mkern-2mu$}\hfill
  \mkern-6mu\box\z@$}

\def\projlim{\qopnamewl@{proj\,lim}}
\def\injlim{\qopnamewl@{inj\,lim}}
\def\varinjlim{\mathpalette\varlim@\rightarrowfill@}
\def\varprojlim{\mathpalette\varlim@\leftarrowfill@}
\def\varliminf{\mathpalette\varliminf@{}}
\def\varliminf@#1{\mathop{\underline{\vrule\@depth.2\ex@\@width\z@
   \hbox{$#1\m@th\operator@font lim$}}}}
\def\varlimsup{\mathpalette\varlimsup@{}}
\def\varlimsup@#1{\mathop{\overline
  {\hbox{$#1\m@th\operator@font lim$}}}}

%
%
%
%
%
%
%
%
%
%
%
%
%
%
%
%
%
%
%
%
%
%
%

%
%
%
%
%
%
%
%
%
%
%
%
%
%
%
%
%
%
%
%
%
%

%
%
%
%
%
%
%
%
%
%
%
%
%
%
%
%
%
%
%
%
%
%
%
%
\begingroup \catcode `|=0 \catcode `[= 1
\catcode`]=2 \catcode `\{=12 \catcode `\}=12
\catcode`\\=12 
|gdef|@alignverbatim#1\end{align}[#1|end[align]]
|gdef|@salignverbatim#1\end{align*}[#1|end[align*]]

|gdef|@alignatverbatim#1\end{alignat}[#1|end[alignat]]
|gdef|@salignatverbatim#1\end{alignat*}[#1|end[alignat*]]

|gdef|@xalignatverbatim#1\end{xalignat}[#1|end[xalignat]]
|gdef|@sxalignatverbatim#1\end{xalignat*}[#1|end[xalignat*]]

|gdef|@gatherverbatim#1\end{gather}[#1|end[gather]]
|gdef|@sgatherverbatim#1\end{gather*}[#1|end[gather*]]

|gdef|@gatherverbatim#1\end{gather}[#1|end[gather]]
|gdef|@sgatherverbatim#1\end{gather*}[#1|end[gather*]]

|gdef|@multilineverbatim#1\end{multiline}[#1|end[multiline]]
|gdef|@smultilineverbatim#1\end{multiline*}[#1|end[multiline*]]

|gdef|@arraxverbatim#1\end{arrax}[#1|end[arrax]]
|gdef|@sarraxverbatim#1\end{arrax*}[#1|end[arrax*]]

|gdef|@tabulaxverbatim#1\end{tabulax}[#1|end[tabulax]]
|gdef|@stabulaxverbatim#1\end{tabulax*}[#1|end[tabulax*]]

|endgroup

\def\align{\@verbatim \frenchspacing\@vobeyspaces \@alignverbatim
You are using the "align" environment in a style in which it is not defined.}

\@namedef{align*}{\@verbatim\@salignverbatim
You are using the "align*" environment in a style in which it is not defined.}
\expandafter\let\csname endalign*\endcsname =\endtrivlist

\def\alignat{\@verbatim \frenchspacing\@vobeyspaces \@alignatverbatim
You are using the "alignat" environment in a style in which it is not defined.}

\@namedef{alignat*}{\@verbatim\@salignatverbatim
You are using the "alignat*" environment in a style in which it is not defined.}
\expandafter\let\csname endalignat*\endcsname =\endtrivlist

\def\xalignat{\@verbatim \frenchspacing\@vobeyspaces \@xalignatverbatim
You are using the "xalignat" environment in a style in which it is not defined.}

\@namedef{xalignat*}{\@verbatim\@sxalignatverbatim
You are using the "xalignat*" environment in a style in which it is not defined.}
\expandafter\let\csname endxalignat*\endcsname =\endtrivlist

\def\gather{\@verbatim \frenchspacing\@vobeyspaces \@gatherverbatim
You are using the "gather" environment in a style in which it is not defined.}

\@namedef{gather*}{\@verbatim\@sgatherverbatim
You are using the "gather*" environment in a style in which it is not defined.}
\expandafter\let\csname endgather*\endcsname =\endtrivlist

\def\multiline{\@verbatim \frenchspacing\@vobeyspaces \@multilineverbatim
You are using the "multiline" environment in a style in which it is not defined.}

\@namedef{multiline*}{\@verbatim\@smultilineverbatim
You are using the "multiline*" environment in a style in which it is not defined.}
\expandafter\let\csname endmultiline*\endcsname =\endtrivlist

\def\arrax{\@verbatim \frenchspacing\@vobeyspaces \@arraxverbatim
You are using a type of "array" construct that is only allowed in AmS-LaTeX.}

\def\tabulax{\@verbatim \frenchspacing\@vobeyspaces \@tabulaxverbatim
You are using a type of "tabular" construct that is only allowed in AmS-LaTeX.}

\@namedef{arrax*}{\@verbatim\@sarraxverbatim
You are using a type of "array*" construct that is only allowed in AmS-LaTeX.}
\expandafter\let\csname endarrax*\endcsname =\endtrivlist

\@namedef{tabulax*}{\@verbatim\@stabulaxverbatim
You are using a type of "tabular*" construct that is only allowed in AmS-LaTeX.}
\expandafter\let\csname endtabulax*\endcsname =\endtrivlist


\def\@@eqncr{\let\@tempa\relax
    \ifcase\@eqcnt \def\@tempa{& & &}\or \def\@tempa{& &}%
      \else \def\@tempa{&}\fi
     \@tempa
     \if@eqnsw
        \iftag@
           \@taggnum
        \else
           \@eqnnum\stepcounter{equation}%
        \fi
     \fi
     \global\tag@false
     \global\@eqnswtrue
     \global\@eqcnt\z@\cr}

 \def\endequation{%
     \ifmmode\ifinner 
      \iftag@
        \addtocounter{equation}{-1} 
        $\hfil
           \displaywidth\linewidth\@taggnum\egroup \endtrivlist
        \global\tag@false
        \global\@ignoretrue   
      \else
        $\hfil
           \displaywidth\linewidth\@eqnnum\egroup \endtrivlist
        \global\tag@false
        \global\@ignoretrue 
      \fi
     \else   
      \iftag@
        \addtocounter{equation}{-1} 
        \eqno \hbox{\@taggnum}
        \global\tag@false%
        $$\global\@ignoretrue
      \else
        \eqno \hbox{\@eqnnum}
        $$\global\@ignoretrue
      \fi
     \fi\fi
 } 

 \newif\iftag@ \tag@false
 
 \def\tag{\@ifnextchar*{\@tagstar}{\@tag}}
 \def\@tag#1{%
     \global\tag@true
     \global\def\@taggnum{(#1)}}
 \def\@tagstar*#1{%
     \global\tag@true
     \global\def\@taggnum{#1}%
}


\makeatother

\begin{document}
\title[The logarithmic Cauchy quotient means]{The logarithmic Cauchy
quotient mean}
\author{Martin Himmel}
\address{Technical University Mountain Academy Freiberg, Faculty of
Mathematics and Computer Science, Intitute of Applied Analysis, Nonnengasse
22, 09596 Freiberg, Germany}
\email{M.Himmel@wmie.uz.zgora.pl}
\author{Janusz Matkowski}
\curraddr{Institute of Mathematics,
University of Zielona G\'{o}ra, Szafrana 4A, PL 65-516 Zielona G\'{o}ra,
Poland}
\email{J.Matkowski@wmie.uz.zgora.pl}

\begin{abstract}
Motivated by recent results on beta-type functions, a new family of means,
which are of logarithmic Cauchy quotient type, are determined and characterized.
\end{abstract}

\maketitle

\section{Introduction}

\footnotetext{\textit{2010 Mathematics Subject Classification. }Primary:
26E60, 39B12.
\par
\textit{Keywords and phrases:} mean, premean, logarithmic Cauchy quotient
mean, functional equation.
\par
{}}

The relationship between the Euler Gamma function and the Beta function
inspired to introduce the beta-type function \cite{MatHim4}. Here we propose
the $k$-variable logarithmic Cauchy quotients, the logarithmic counterpart
of beta-type functions, as follows. Given a positive integer $k\geq 2$, and
a function $f:I\rightarrow \left( 0,+\infty \right) $ (or $f:I\rightarrow
\left( -\infty ,0\right) $) where $I\subset \left( 0,+\infty \right) $ is an
interval that is closed under multiplication, we define the $k$-variable
logarithmic Cauchy quotient $L_{f,k}:I^{k}\rightarrow
\left( 0,+\infty \right) $ by%
\begin{equation}
L_{f,k}\left( x_{1},...,x_{k}\right) =\frac{f\left( x_{1}\right)
+\cdots+f\left( x_{k}\right) }{f\left( x_{1} \cdots  x_{k}\right) },
\label{eq:Lfk}
\end{equation}%
and we refer to $f$ as its generator (Section 2, Definition 1). Similarly to
the case of beta-type functions (\cite{MatHim4}), we give conditions under
which $L_{f,k}$ is a premean or a mean (see Lemma 2, Theorem 2, Theorem 3 and Theorem
4).

In Section 3, assuming that $1\in I$, we prove that two $k$-variable
logarithmic Cauchy quotients coincide if and only if their generators are
proportional (Theorem 1).

In Section 4, applying the theory of iterative functional equations \cite%
{Kuczma}, we determine the general solution of the functional equation%
\begin{equation}
f\left( x\right) =\frac{x}{k}f\left( x^{k}\right) \text{,}
\label{eq:Lfk_Reflexivity}
\end{equation}%
that is the reflexivity condition of $L_{f,k}$ (Lemma 2). Based on this
lemma, in Section 5, we prove Theorem 2, our main result, which says that $%
L_{f,k}$ is a $k$-variable mean in $\left( 1,+\infty \right) $\ iff 
there is $c\neq 0$ such that 
\begin{equation}
f\left( x\right) =c\frac{\log x}{\sqrt[k-1]{x}}, \label{eq:meanGenLfk}
\end{equation}%
for all $x \in (1, +\infty)$, or, equivalently, that $L_{f,k}=\mathcal{L}_{k},$ where $%
\mathcal{L}_{k}$ is a new $k$-variable mean, called the $k$-variable \textit{%
logarithmic} \textit{Cauchy quotient mean }(Definition 2), and defined by 
\begin{equation}
\mathcal{L}_{k}\left( x_{1},\ldots,x_{k}\right) :=\sum\limits_{i=1}^{k}\frac{%
\log x_{i}}{\sum\limits_{l=1}^{k}\log x_{l}}\mathcal{G}_{k-1}\left(
x_{1},\ldots,x_{i-1},x_{i+1},\ldots,x_{k}\right),
\label{eq:Lfk_mean}
\end{equation}%
for all $x_{1},\ldots,x_{k} \in (1, + \infty)$,
where $\mathcal{G}_{k-1}:\left( 0,+\infty \right) ^{k-1}\rightarrow \left(
0,+\infty \right) $ is the $\left( k-1\right) $-variable geometric mean,%
\begin{equation*}
\mathcal{G}_{k-1}\left( x_{1},\ldots,x_{k-1}\right) =\sqrt[k-1]{x_{1}\cdots x_{k-1}}\text{.}
\end{equation*}%
Moreover, some properties of $\mathcal{L}_{k}$ are discussed, the results
for the interval $\left( 0,1\right) $ is formulated, as well as the
corresponding extension of the logarithmic Cauchy quotient mean on the interval $%
\left( 0,+\infty \right) $ (denoted by $\mathfrak{L}_{k}$) is proposed.

We end our paper with two characterizations of the mean $\mathcal{L}_{k}$.
In section 6, applying a variant of the Krull theorem on difference equations (\cite{Krull}) given in Kuczma \cite{{Kuczma}},
we show that  $L_{f,k}=%
\mathcal{L}_{k}$ iff $L_{f,k}$ is reflexive in $\left( 1,+\infty \right) $ and the
function $\log \circ f\circ \exp \circ \exp $ is convex. 
\ In Section 7, assuming that $f:\left( 1,+\infty \right)
\rightarrow \left( 0,+\infty \right) $ is extendable to a function of the
class $C^{2}$ in $\left[ 1,+\infty \right) $, we prove that $L_{f,k}$ is a
premean in $\left( 1,+\infty \right) $ iff it coincides with the mean $%
\mathcal{L}_{k}$.

\section{Some basic notions}

Throughout this paper $I\subset \mathbb{R}$ stands for an interval.

Let $k\in \mathbb{N}$, $k\geq 2.$ A function $M:I^{k}\rightarrow \mathbb{R}$
is called a $k$-\textit{variable mean in} $I$, if%
\begin{equation*}
\min \left( x_{1},\ldots ,x_{k}\right) \leq M\left( x_{1},\ldots
,x_{k}\right) \leq \max \left( x_{1},\ldots ,x_{k}\right) ,\text{ \ \ \ \ }%
x_{1},\ldots ,x_{k}\in I;
\end{equation*}%
and it is called \textit{strict}, if these inequalities are strict for all
nonconstant $k$-tuples $\left( x_{1},\ldots ,x_{k}\right) \in I^{k}.$

Let us note the following easy to verify properties of means.

\begin{remark}
\label{PropertiesOfAMean}
If $M$ is a $k$-variable mean in an interval $I,$ then
\begin{enumerate}
	\item[(i)] \ \ for every subinterval $J\subset I$, $M$ restricted to $J^{k}$ is a
mean in $J$, and $M\left( J^{k}\right) =J,$ in particular, $%
M:I^{k}\rightarrow I;$

\item[(ii)] $\ M$ is reflexive, i.e.%
\begin{equation*}
M\left( x,...,x\right) =x,\ \ \ \ \ x\in I.
\end{equation*}
\end{enumerate}

\end{remark}

A function $M:I^{k}\rightarrow \mathbb{R}$ is called $k$-variable \textit{%
premean} in $I,$ if it reflexive and $M\left( I^{k}\right) =I$ (see \cite%
{JM2006}, also \cite{Toader}, p. 29).

\begin{remark}
If a reflexive function $M:I^{k}\rightarrow \mathbb{R}$ is (strictly) increasing in
each variable, then it is a (strict) $k$-variable mean in $I$.
\end{remark}

Let us introduce some notion playing here a significant role.

\begin{definition}\label{def:Lfk}
Let $k\in \mathbb{N}$, $k\geq 2,$ be fixed, and let $I\subset \left(
0,+\infty \right) $ be an interval that is closed under multiplication. For a
 function $f:I\rightarrow \left( 0,+\infty \right) $ (or $%
f:I\rightarrow \left( -\infty ,0\right) $), the function $%
L_{f,k}:I^{k}\rightarrow \left( 0,+\infty \right) $ defined by \eqref{eq:Lfk}
is called \textit{\ }$k$\textit{-variable logarithmic Cauchy quotient},%
\textit{\ and }$f$ is called a\textit{\ generator of }$L_{f,k}$.
\end{definition}

\begin{remark}
An open interval $I \subset \mathbb{R}$ is closed under multiplication iff
$I=(p, +\infty)$ for some $p \in [1,+\infty)$; or $I=(0, p)$ for some $p \in (0,1]$, or $I=\mathbb{R}$.
\end{remark}

From the definitions of the logarithmic Cauchy quotient $L_{f,k}$ and the
reflexivity we obtain

\begin{remark}
\label{LfkReflexivity}
Under the assumptions of this definition, the logarithmic Cauchy quotient $%
L_{f,k}:I^{k}\rightarrow \left( 0,+\infty \right) $ of a generator $%
f:I\rightarrow \left( 0,+\infty \right) $ is reflexive (or it is a mean or a premean) if its generator $f$ satisfies the iterative
functional equation \eqref{eq:Lfk_Reflexivity}.
\end{remark}

\section{Equality of two logarithmic Cauchy quotients and a functional
equation}

\begin{remark}
Let $k\in \mathbb{N}$, $k\geq 2$,  and interval $I\subset \left( 0,+\infty
\right) $ satisfy conditions of Definition 1 and let $f,g:I\rightarrow
\left( 0,+\infty \right) $. Then $L_{g,k}=L_{f,k}$ iff the functions\ $f$ and 
$g$ satisfy the functional equation%
\begin{equation}
\label{eq:Lfk=Lgk}
\frac{g\left( x_{1}\cdots  x_{k}\right) }{f\left( x_{1}\cdots 
x_{k}\right) }=\frac{g\left( x_{1}\right) +\cdots +g\left( x_{k}\right) }{%
f\left( x_{1}\right) +\cdots+f\left( x_{k}\right) }\text{, \ \ \ \ \ \ }%
x_{1},\ldots,x_{k}\in I.  
\end{equation}%
Moreover, if \thinspace $1\in I$, then $f$ and $g$ satisfy this equation if,
and only if, $g=cf$ for some $c>0$.
\end{remark}

\begin{proof}
The first fact is an immediate consequence of Definition 1. To show the
remaining one, assume that $f$ and $g$ satisfy this equation. Putting $%
x_{1}=x$ and $x_{2}=x_{3}=\ldots=x_{k}=1$ gives%
\begin{equation*}
\frac{g\left( x\right) }{f\left( x\right) }=\frac{g\left( x\right) +\left(
k-1\right) g\left( 1\right) }{f\left( x\right) +\left( k-1\right) f\left(
1\right) },\ \ \ \ \ \ x\in I,
\end{equation*}%
whence 
\begin{equation*}
\left( k-1\right) f\left( 1\right) g\left( x\right) =\left( k-1\right)
g\left( 1\right) f\left( x\right) \text{, \ \ \ \ \ }x\in I.
\end{equation*}%
Since $f$ and $g$ are positive functions,
it follows that $f\left( 1\right) \neq 0$ and $g\left( 1\right) \neq 0$.
Setting $c:=\frac{g\left( 1\right) }{f\left( 1\right) }$ we hence get $g=cf$.
The converse implication is obvious.
\end{proof}

In the sequel we have to exclude $1$ from the interval $I$, as we are mainly
interested in the case when $f\left( 1\right) =0=g\left( 1\right) $. It
turns out that in this case the above functional equation is not trivial. We
prove

\begin{lemma}
Let\ $k\in \mathbb{N}$, $k\geq 2$, be fixed. If the functions $f,g:\left(
1,+\infty \right) \rightarrow \left( 0,+\infty \right) $ \verb|[|or $f,g:\left(
0,1\right) \rightarrow \left( 0,+\infty \right) $\verb|]| satisfy equation \eqref{eq:Lfk=Lgk} with $%
I=\left( 1,+\infty \right) $ \ \verb|[|or $I=\left( 0,1\right) $\verb|]|, and 
\begin{equation*}
c:=\lim_{x\rightarrow 1}\frac{g\left( x\right) }{f\left( x\right) }\text{ }
\end{equation*}%
exists, then $g=cf$.
\end{lemma}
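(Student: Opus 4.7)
The plan is to reduce the functional equation for the pair $(f,g)$ to a very simple relation for the quotient $h:=g/f\colon I\to(0,+\infty)$, and then exploit the existence of $\lim_{x\to 1}h(x)=c$.

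\textbf{Step 1 (diagonal substitution).} I would first set $x_1=\cdots=x_k=x$ in equation \eqref{eq:Lfk=Lgk}. Because the right-hand side becomes $\frac{k\,g(x)}{k\,f(x)}=h(x)$ while the left-hand side is $h(x^k)$, this immediately yields the one-variable functional equation
\[
h(x^k)=h(x),\qquad x\in I.
\]
Note that no division is problematic because $f,g>0$.

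\textbf{Step 2 (iteration toward $1$).} Take an arbitrary $x\in I$. The interval $I\in\{(1,+\infty),(0,1)\}$ is closed under the map $y\mapsto y^{1/k}$ (if $x>1$ then $x^{1/k}>1$, and if $x<1$ then $x^{1/k}<1$), so $x^{1/k^n}\in I$ for every $n\in\mathbb{N}$. Applying the identity of Step~1 to $x^{1/k}$ gives $h(x)=h((x^{1/k})^k)=h(x^{1/k})$, and a straightforward induction yields
\[
h(x)=h\bigl(x^{1/k^{n}}\bigr),\qquad n\in\mathbb{N}.
\]

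\textbf{Step 3 (passage to the limit).} Since $x^{1/k^n}\to 1$ as $n\to\infty$ (from the right when $I=(1,+\infty)$ and from the left when $I=(0,1)$), and since by hypothesis $\lim_{y\to 1}h(y)=c$, taking $n\to\infty$ in the previous display gives $h(x)=c$. As $x\in I$ was arbitrary, $g/f\equiv c$, i.e.\ $g=cf$ on $I$.

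\textbf{Comment on difficulty.} There is no genuinely hard step: the whole proof hinges on the observation that the diagonal substitution collapses the multi-variable equation for $(f,g)$ into the scalar equation $h(x^k)=h(x)$, after which the only ingredient used is that the dynamical system $y\mapsto y^{1/k}$ drives every point of $(1,+\infty)$ or $(0,1)$ toward $1$. The reason continuity of $h$ on $I$ is \emph{not} needed — only the one-sided limit at $1$ — is that the iterates $x^{1/k^n}$ do reach $1$ in the limit, and $h$ is constant on each such orbit by Step~2.
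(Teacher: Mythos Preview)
Your proof is correct and follows essentially the same route as the paper's: define $h=g/f$, substitute $x_1=\cdots=x_k=x$ to obtain $h(x^k)=h(x)$, iterate to $h(x)=h(x^{1/k^n})$, and let $n\to\infty$ using the assumed limit at $1$. The only difference is presentational; you add a few more words of justification (e.g.\ that $I$ is invariant under $y\mapsto y^{1/k}$), but the argument is the same.
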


\begin{proof}
Put $h:=\frac{g}{f}$. Setting $x_{1}=x_{2}=...=x_{k}=x$ in \eqref{eq:Lfk=Lgk}, we get $%
h\left( x^{k}\right) =h\left( x\right) $ for all$\ x\in(1,+\infty)$, or equivalently,%
\begin{equation*}
h\left( x\right) =h\left( x^{\frac{1}{k}}\right) ,\ \ \ \ \ \ x\in \left(
1,+\infty \right) ,
\end{equation*}%
whence, by induction,%
\begin{equation*}
h\left( x\right) =h\left( x^{\frac{1}{k^{n}}}\right) ,\ \ \ \ \ \ n\in 
\mathbb{N}\text{, \ \ }x\in \left( 1,+\infty \right) .
\end{equation*}%
Letting $n\rightarrow +\infty $ we hence get $h\left( x\right) =c$ for all $%
x\in \left( 1,+\infty \right) .$
\end{proof}

From this lemma we obtain

\begin{theorem}
Let\ $k\in \mathbb{N}$, $k\geq 2$, be fixed. Assume that $f,g:\left(
1,+\infty \right) \rightarrow \left( 0,+\infty \right) $ (or $f,g:\left(
0,1\right) \rightarrow \left( 0,+\infty \right) $ are such that the limit $%
\lim_{x\rightarrow 1}\frac{g\left( x\right) }{f\left( x\right) }$ exists.
Then $L_{f,k}=L_{g,k}$ if and only if $g=cf$ for some $c>0$.
\end{theorem}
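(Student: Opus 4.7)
The theorem is essentially a packaging of Remark 4 and Lemma 1 (the preceding lemma), so the plan is quite short. I would split into the two implications.

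For the easy direction ($\Leftarrow$), assume $g = cf$ for some $c > 0$. Substituting into the defining formula~\eqref{eq:Lfk},
\[
L_{g,k}(x_1,\ldots,x_k) = \frac{cf(x_1)+\cdots+cf(x_k)}{cf(x_1\cdots x_k)} = \frac{f(x_1)+\cdots+f(x_k)}{f(x_1\cdots x_k)} = L_{f,k}(x_1,\ldots,x_k),
\]
so $L_{f,k}=L_{g,k}$. Nothing here uses the limit hypothesis.

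For the nontrivial direction ($\Rightarrow$), assume $L_{f,k} = L_{g,k}$. By the first assertion of Remark~4 (which is immediate from Definition~\ref{def:Lfk} after clearing denominators), the functions $f$ and $g$ satisfy the functional equation~\eqref{eq:Lfk=Lgk} on $I=(1,+\infty)$ (respectively $I=(0,1)$). By hypothesis, the limit $c := \lim_{x\to 1} g(x)/f(x)$ exists, and so Lemma~1 applies verbatim to conclude $g = cf$.

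It remains to check $c > 0$. Since both $f(x)$ and $g(x)$ are strictly positive on $I$, the quotient $g/f$ is a positive function on $I$, hence its limit satisfies $c \ge 0$. If $c$ were $0$, then $g \equiv 0$, contradicting $g:I\to(0,+\infty)$; so $c > 0$, completing the proof.

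The only step that is not purely bookkeeping is the appeal to Lemma~1, but that lemma has already been established; consequently there is no real obstacle here. The forward direction could in principle fail if one forgot to rule out $c=0$, so the one subtlety worth flagging is the use of positivity of $g$ to upgrade the nonnegative limit to a strictly positive constant.
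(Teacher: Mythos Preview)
Your proof is correct and follows exactly the route the paper intends: the paper itself states only ``From this lemma we obtain'' Theorem~1, and your write-up spells out precisely that deduction via the functional equation~\eqref{eq:Lfk=Lgk} and Lemma~1, together with the positivity check for $c$. One minor slip: the remark you invoke (that $L_{f,k}=L_{g,k}$ is equivalent to~\eqref{eq:Lfk=Lgk}) is Remark~5 in the paper, not Remark~4.
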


\section{Reflexivity of the logarithmic Cauchy quotient}

Applying the theory of the iterative functional equations (see\ \cite{Kuczma}%
, p. 46, Theorem 2.1) one gets

\begin{lemma}
Fix an integer $k \geq 2$ and $p \in (1, + \infty)$. Then
\begin{enumerate}
	\item[(i)] 
a function $f:\left[ p,+\infty \right) \rightarrow \left( 0,+\infty
\right) $ satisfies equation \eqref{eq:Lfk_Reflexivity} for all $x \in \left[ p,+\infty \right)$
if and only if 
\begin{equation}
f\left( x\right) =k^{n}x^{^{\frac{k^{-n}-1}{k-1}}}f_{0}\left(
x^{k^{-n}}\right)
\label{eq:Lfk_Reflexivity_Solution}
\end{equation}%
for all $x\in \left[ p^{k^{n}},p^{k^{n+1}}\right)$ and $n\in \mathbb{N}_{0}$,
where $f_{0}:=f\mid _{_{\left[ p,p^{k}\right) }};$ moreover, $f$ is
continuous if and only if so is $f_{0}$ and 
\begin{equation}
\lim_{x\rightarrow p^{k}-}f_{0}\left( x\right) =\frac{k}{p}f_{0}\left(
p\right) 
\label{eq:Lfk_Reflexivity_Solution_Continuity}.
\end{equation}

\item[(ii)] a function $f:\left( p,+\infty \right) \rightarrow \left( 0,+\infty
\right) $ satisfies equation \eqref{eq:Lfk_Reflexivity} for all $x \in (p, + \infty)$
if and only if condition \eqref{eq:Lfk_Reflexivity_Solution}
holds for all $x\in \left[ p^{k^{n}},p^{k^{n+1}}\right)$ and $n\in \mathbb{N}_{0}$,
where $f_{0}:=f\mid _{_{\left[ p,p^{k}\right) }};$ moreover, $f$ is
continuous if and only if so is $f_{0}$ and \eqref{eq:Lfk_Reflexivity_Solution_Continuity} holds true.

\item[(iii)] a function $f:\left( 1,+\infty \right) \rightarrow \left( 0,+\infty
\right) $ satisfies equation \eqref{eq:Lfk_Reflexivity} for all $x \in (1, + \infty)$
if and only if condition \eqref{eq:Lfk_Reflexivity_Solution} holds
for all $x\in \left[ p^{k^{n}},p^{k^{n+1}}\right)$ and $n\in \mathbb{Z}$,
%
where $f_{0}:=f\mid _{_{\left[ p,p^{k}\right) }};$ moreover, $f$ is
continuous if and only if so is $f_{0}$ and \eqref{eq:Lfk_Reflexivity_Solution_Continuity} holds true.
\end{enumerate}

\end{lemma}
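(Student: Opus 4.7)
The equation $f(x) = (x/k) f(x^k)$, equivalently $f(x^k) = (k/x) f(x)$, is a first-order linear iterative functional equation for the map $\phi(x) := x^k$, so the plan is to invoke Theorem 2.1 of \cite{Kuczma}. On each of the three domains listed, $\phi$ is a continuous strictly increasing self-bijection whose only fixed point $x=1$ lies outside (or at the excluded boundary of) the domain, so every orbit $\{x^{k^n}\}$ is infinite and escapes to the boundary. This general framework guarantees that the solutions of the equation are parametrized by the arbitrary choice of $f$ on any fundamental domain of $\phi$.

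For case (i), I would observe that the intervals $J_n := [p^{k^n}, p^{k^{n+1}})$ partition $[p,+\infty)$ with $\phi:J_n \to J_{n+1}$ a bijection, and that $J_0 = [p,p^k)$ is a fundamental domain. For $x \in J_n$, iterating the recursion $f(y) = (k/y^{1/k}) f(y^{1/k})$ exactly $n$ times with $y$ running through $x, x^{1/k}, \ldots, x^{k^{-(n-1)}}$ yields
\[
f(x) = k^n\, x^{-\left(1/k + 1/k^2 + \cdots + 1/k^n\right)} f_0\!\left(x^{k^{-n}}\right),
\]
and summing the geometric series gives the exponent $(k^{-n}-1)/(k-1)$, producing formula \eqref{eq:Lfk_Reflexivity_Solution}. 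Conversely, a direct substitution shows that this formula defines a solution for every $f_0 : J_0 \to (0,+\infty)$, establishing the desired bijection between solutions $f$ and choices of $f_0$.

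For continuity, the explicit formula is continuous inside each $J_n$ exactly when $f_0$ is continuous on $J_0$, so the only potential discontinuities lie at the breakpoints $x = p^{k^n}$, $n \geq 1$. Evaluation from the right at $p^k$ gives $f(p^k) = (k/p) f_0(p)$, while the left limit equals $\lim_{x \to p^{k-}} f_0(x)$, which yields precisely condition \eqref{eq:Lfk_Reflexivity_Solution_Continuity}. A short calculation shows that compatibility at each subsequent $p^{k^n}$ reduces to the same identity, because the recursion transports the condition forward along the orbit; thus no further constraint is needed.

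Cases (ii) and (iii) require only cosmetic modifications. In (ii) the fundamental domain becomes $(p, p^k)$; since $p$ itself is excluded, no constraint is imposed there, while the condition at $p^k$ is unchanged. In (iii), $\phi$ has no fixed point in $(1,+\infty)$ and both $\phi$ and $\phi^{-1}$ act on this domain with orbits accumulating only at the excluded point $1$, so the partition extends to $(1,+\infty) = \bigsqcup_{n\in\mathbb{Z}} J_n$ and the formula carries over verbatim to negative $n$. The principal bookkeeping obstacle throughout is the telescoping of the accumulated factors $k/x^{k^{-i}}$ into the compact exponent $(k^{-n}-1)/(k-1)$, which must be tracked consistently as $n$ ranges over $\mathbb{N}_0$ in parts (i)--(ii) and over all of $\mathbb{Z}$ in part (iii).
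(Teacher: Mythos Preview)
Your approach is correct and matches the paper's: the paper gives no explicit proof at all, merely citing Kuczma \cite{Kuczma}, p.~46, Theorem~2.1, exactly as you do, and your write-up simply fills in the iteration and telescoping that the citation leaves implicit. One minor slip: in case~(ii) the fundamental domain should be the half-open interval $(p,p^{k}]$ (so that the $\phi$-images $(p^{k^{n}},p^{k^{n+1}}]$ partition $(p,+\infty)$), not the open interval $(p,p^{k})$; the paper's own statement of~(ii) is itself somewhat inconsistent on this point, but your continuity discussion should be adjusted accordingly.
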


\section{Means of the logarithmic Cauchy quotient type}

\begin{definition}
The function $\mathcal{L}_{k}:\left( 1,+\infty \right) ^{k}\rightarrow \left(
1,+\infty \right) $, given by%
\begin{equation}
\mathcal{L}_{k}\left( x_{1},\ldots ,x_{k}\right) :=\sum\limits_{i=1}^{k}%
\frac{\log x_{i}}{\sum\limits_{l=1}^{k}\log x_{l}}\left(
\prod\limits_{j=1,j\neq i}^{k}x_{j}\right) ^{\frac{1}{k-1}},
\label{eq:Lfkmean}
\end{equation}%
 that is, 
\begin{equation*}
\mathcal{L}_{k}\left( x_{1},\ldots ,x_{k}\right) =\sum\limits_{i=1}^{k}\frac{%
\log x_{i}}{\sum\limits_{l=1}^{k}\log x_{l}}\mathcal{G}_{k-1}\left(
x_{1},\ldots,x_{i-1},x_{i+1},\ldots,x_{k}\right) 
\end{equation*}%
for all $x_{1},\ldots ,x_{k} \in (1,+\infty)$, where $\mathcal{G}_{k-1}$ is the $\left( k-1\right) $-variable symmetric
geometric mean in $\left( 1,+\infty \right) $,
is called $k$-variable logarithmic Cauchy quotient mean in $\left( 1,+\infty \right) $.

\end{definition}

We also use some elementary fact on the Jensen equation of two or more variables.

\begin{lemma}
\label{kJensen}
\textit{Let }$C$\textit{\ be a convex set of a linear space. A function }$%
f:C\rightarrow \mathbb{R}$\textit{\ is a Jensen function of }$k$\textit{\ variables
for some }$k\in \mathbb{N},$\textit{\ }$k\geq 2$\textit{, i.e., it satisfies the
equality}%
\begin{equation}
f\left( \frac{x_{1}+\cdots+x_{k}}{k}\right) =\frac{f\left( x_{1}\right)
+\cdots+f\left( x_{k}\right) }{k}\text{, \ \ \ }x_1,\ldots,x_k\in C,
\label{thm:kJensen}
\end{equation}%
\textit{if and only if it is a Jensen function of two variables, i.e.,} 
\begin{equation*}
f\left( \frac{x+y}{2}\right) =\frac{f\left( x\right) +f\left( y\right) }{2}%
\text{, \ \ \ }x,y\in C.
\end{equation*}
\end{lemma}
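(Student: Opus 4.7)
The plan is to prove both implications by choosing judicious substitutions that exploit convexity of $C$ (so that averages of elements of $C$ lie in $C$ and can themselves be used as inputs).

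\textbf{Direction $k$-Jensen $\Rightarrow$ 2-Jensen.} This is the shorter direction. Given $x,y\in C$, put $m:=\frac{x+y}{2}$, which lies in $C$ by convexity. I would then substitute $x_1=x$, $x_2=y$, $x_3=\cdots=x_k=m$ into \eqref{thm:kJensen}. Since
\begin{equation*}
\frac{x+y+(k-2)m}{k}=\frac{2m+(k-2)m}{k}=m,
\end{equation*}
the equation reduces to $f(m)=\frac{f(x)+f(y)+(k-2)f(m)}{k}$, which upon rearrangement yields exactly $f\!\left(\frac{x+y}{2}\right)=\frac{f(x)+f(y)}{2}$.

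\textbf{Direction 2-Jensen $\Rightarrow$ $k$-Jensen.} I would use the classical Cauchy-style forward-then-backward induction. First, a straightforward induction on $n$ shows that the 2-Jensen equation implies the $2^n$-variable Jensen equation for every $n\in\mathbb{N}$: grouping the $2^{n+1}$ points into two halves, apply the inductive hypothesis on each half, then a final 2-Jensen step. Second, given any $k\geq 2$, pick $n$ with $N:=2^n\geq k$, and given $x_1,\dots,x_k\in C$, set $m:=\frac{x_1+\cdots+x_k}{k}\in C$ (again by convexity). Applying the $N$-variable Jensen identity to the tuple $(x_1,\dots,x_k,m,\dots,m)$ with $N-k$ copies of $m$ gives average $\frac{km+(N-k)m}{N}=m$, and the equation becomes
\begin{equation*}
f(m)=\frac{f(x_1)+\cdots+f(x_k)+(N-k)f(m)}{N},
\end{equation*}
which after rearranging yields $kf(m)=f(x_1)+\cdots+f(x_k)$, i.e., \eqref{thm:kJensen}.

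\textbf{Main obstacle.} There is no genuine obstacle here; the proof is a standard two-line manipulation in each direction. The only point that requires care is confirming that the auxiliary point $m$ used in both substitutions lies in $C$, which is exactly what convexity of $C$ guarantees, and this is the sole place where the hypothesis on $C$ is used.
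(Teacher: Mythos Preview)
Your proof is correct and follows essentially the same approach as the paper: both directions use the identical substitutions (filling with the midpoint $m=\frac{x+y}{2}$ for $k\Rightarrow 2$, and Cauchy's forward--backward induction via $2^n$-Jensen with copies of $m=\frac{x_1+\cdots+x_k}{k}$ for $2\Rightarrow k$). Your explicit remark that convexity of $C$ is needed precisely to guarantee $m\in C$ is a welcome clarification that the paper leaves implicit.
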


\begin{proof}
\bigskip Indeed, for arbitrary $x,y\in C,$ using \eqref{thm:kJensen}, we have%
\begin{equation*}
f\left( \frac{x+y}{2}\right) =f\left( \frac{x+y+\sum\limits_{i=1}^{k-2}\frac{x+y}{2}%
}{k}\right) =\frac{f\left( x\right) +f\left( y\right)
+\sum\limits_{i=1}^{k-2}f\left( \frac{x+y}{2}\right) }{k},
\end{equation*}%
whence $f\left( \frac{x+y}{2}\right) =\frac{f\left( x\right) +f\left(
y\right) }{2}$, so $f$ is a Jensen function of two variables.

If $f$ is a Jensen function of two variables, then (see Kuczma \cite{Kuczma2}, p. 126,
Lemma 1, where the Jensen convexity is considered), by induction, for every $%
n\in \mathbb{N}$, we get%
\begin{equation*}
f\left( \frac{x_{1}+\cdots+x_{2^{n}}}{2^{n}}\right) =\frac{f\left( x_{1}\right)
+\cdots+f\left( x_{2^{n}}\right) }{2^{n}}\text{, \ \ \ \ \ \ }%
x_{1},\ldots,x_{2^{n}}\in C\text{. }
\end{equation*}%
Let \ $x_{1},\ldots,x_{k}\in C$ be arbitrarily fixed$.$ Choosing $n$ such that $%
k\leq 2^{n}$ and setting here 
\begin{equation*}
x_{k+1}=x_{k+2}=\cdots=x_{2^{n}}:=\frac{x_{1}+\cdots+x_{k}}{k},
\end{equation*}%
we get 
\begin{eqnarray*}
f\left( \frac{x_{1}+\cdots+x_{k}}{k}\right)  &=&f\left( \frac{%
x_{1}+\cdots+x_{k}+\left( 2^{n}-k\right) \frac{x_{1}+\cdots+x_{k}}{k}}{2^{n}}%
\right)  \\
&=&f\left( \frac{x_{1}+\cdots+x_{k}+\sum\limits_{j=k+1}^{2^{n}}\frac{x_{1}+\cdots+x_{k}}{k%
}}{2^{n}}\right)  \\
&=&\frac{f\left( x_{1}\right) +\cdots+f\left( x_{k}\right)
+\sum\limits_{j=k+1}^{2^{n}}f\left( \frac{x_{1}+\cdots+x_{k}}{k}\right) }{2^{n}} \\
&=&\frac{f\left( x_{1}\right) +\cdots+f\left( x_{k}\right) +\left(
2^{n}-k\right) f\left( \frac{x_{1}+\cdots+x_{k}}{k}\right) }{2^{n}},
\end{eqnarray*}%
whence 
\begin{equation*}
kf\left( \frac{x_{1}+\cdots+x_{k}}{k}\right) =f\left( x_{1}\right) +\cdots+f\left(
x_{k}\right), 
\end{equation*}%
which shows that $f$ is a Jensen function of $k$ variables.

\end{proof}

The main result of this paper reads as follows.

\begin{theorem}
Fix an integer $k \geq 2$ and a function $f:\left( 1,+\infty \right) \rightarrow
\left( 0,+\infty \right) $ \verb|[| $f:\left( 0,1 \right) \rightarrow \left(
0, +\infty \right) $ \verb|]|.
The following statements are pairwise equivalent:

\begin{enumerate}
	\item[(i)] the logarithmic Cauchy quotient function $L_{f,k}:\left( 1,+\infty
\right) ^{k}\rightarrow \left( 0,+\infty \right) $ \verb|[| $L_{f,k}:\left( 0,1
\right) ^{k}\rightarrow \left( 0,+\infty \right) $ \verb|]| is a $k$-variable mean in $%
\left( 1,+\infty \right) $ \verb|[| in $%
\left( 0,1 \right) $ \verb|]|;

\item[(ii)] there is a positive \verb|[| negative \verb|]| $c$ such that equality \eqref{eq:meanGenLfk} holds 
for all $x\in \left( 1,+\infty
\right)$ \verb|[| for all $x\in \left( 0,1
\right)$ \verb|]|;

\item[(iii)] the equality  
\begin{equation*}
L_{f,k}=\mathcal{L}_{k}
\end{equation*}%
holds in $ \left( 1,+\infty
\right)^k$ \verb|[| in $ \left( 0,1
\right)^k$ \verb|]|.
\end{enumerate}

 \end{theorem}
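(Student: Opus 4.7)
The plan is to prove the three equivalences cyclically: $(\mathrm{ii})\Rightarrow(\mathrm{iii})\Rightarrow(\mathrm{i})\Rightarrow(\mathrm{ii})$. The first two implications are essentially computational; the third is the substantive step.

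For $(\mathrm{ii})\Rightarrow(\mathrm{iii})$ I would substitute $f(x) = c\log x /\sqrt[k-1]{x}$ directly into \eqref{eq:Lfk}. The constant $c$ cancels between numerator and denominator; using $\log(x_1\cdots x_k) = \sum_l \log x_l$ together with the identity $(x_1\cdots x_k)^{1/(k-1)}\cdot x_i^{-1/(k-1)} = \mathcal{G}_{k-1}(x_1,\ldots,\hat x_i,\ldots,x_k)$, the resulting ratio collapses term by term to the right-hand side of \eqref{eq:Lfkmean}. Note the sign of $c$ must match the prescribed sign of $f$ on the interval in question, which accounts for the bracketed alternative in the statement.

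For $(\mathrm{iii})\Rightarrow(\mathrm{i})$: on $(1,+\infty)^k$ each $\log x_i > 0$, so the weights $\log x_i/\sum_l \log x_l$ are positive and sum to $1$, while each geometric mean $\mathcal{G}_{k-1}(x_1,\ldots,\hat x_i,\ldots,x_k)$ lies in $[\min_j x_j,\max_j x_j]$. Hence $\mathcal{L}_k$ is a convex combination of values in $[\min,\max]$, and is therefore itself a $k$-variable mean on $(1,+\infty)$ (the argument on $(0,1)$ is symmetric after reversing signs of the logarithms).

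For $(\mathrm{i})\Rightarrow(\mathrm{ii})$, assume $L_{f,k}$ is a mean. By Remark \ref{PropertiesOfAMean}(ii) combined with Remark \ref{LfkReflexivity}, the generator $f$ satisfies the iterative functional equation \eqref{eq:Lfk_Reflexivity}, and Lemma 2(iii) yields the general representation \eqref{eq:Lfk_Reflexivity_Solution}, parametrised by the free restriction $f_0 = f|_{[p,p^k)}$. My strategy is to compare $f$ with the candidate $f^*(x) := \log x/\sqrt[k-1]{x}$, which by the two earlier implications also generates a mean, namely $\mathcal{L}_k$; once the limit $\lim_{x\to 1^+} f(x)/f^*(x)$ is shown to exist, Theorem 1 gives $f = c f^*$ for some constant and we are done. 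The delicate point will be producing this limit, which I plan to handle in two stages. First, applying the mean squeeze $\min(x,y)\le L_{f,k}(x,y,\ldots,y)\le\max(x,y)$, letting $x\to 1^+$ and then $y\to 1^+$, and ruling out the alternatives $f(1^+)\in(0,+\infty)$ (which would give $L_{f,k}\to k\neq 1$) and $f(1^+)=+\infty$ (which would violate the upper bound), one forces $f(x)\to 0$ as $x\to 1^+$. Second, using additional mean inequalities near the boundary together with the explicit formula \eqref{eq:Lfk_Reflexivity_Solution} for $n$ tending to $-\infty$, one extracts the sharp asymptotic $f(x)\sim c(x-1)$ as $x\to 1^+$; since $f^*(x)\sim (x-1)$ in the same limit, the ratio $f/f^*$ tends to $c>0$, triggering Theorem 1. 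Extracting this precise rate from only a one-sided inequality, without any a priori smoothness assumption on $f$, is what I expect to be the main obstacle.
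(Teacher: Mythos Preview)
Your treatment of $(\mathrm{ii})\Rightarrow(\mathrm{iii})$ and $(\mathrm{iii})\Rightarrow(\mathrm{i})$ matches the paper's and is fine.

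The gap is in $(\mathrm{i})\Rightarrow(\mathrm{ii})$. First a minor point: Theorem~1 as stated requires $L_{f,k}=L_{g,k}$ as a hypothesis, which you do not have; what you really want is the argument underlying Lemma~1, namely that if both $f$ and $f^*$ satisfy the reflexivity equation~\eqref{eq:Lfk_Reflexivity} then $h:=f/f^*$ satisfies $h(x^k)=h(x)$, so the existence of $\lim_{x\to1^+}h(x)$ forces $h$ to be constant. That part is easily repaired.

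The real problem is the one you yourself flag: establishing that $\lim_{x\to1^+} f(x)/f^*(x)$ exists. This is not a technical obstacle but the entire content of the implication. If you write $x=p^{k^{n}t}$ with $t\in[1,k)$ and push $n\to-\infty$ in formula~\eqref{eq:Lfk_Reflexivity_Solution}, a short computation gives
\[
\frac{f(x)}{f^*(x)}\;\longrightarrow\;\frac{p^{t/(k-1)}\,f_0(p^{t})}{t\log p},
\]
which depends on $t$. Thus the limit exists precisely when $f_0(y)=c\log y/y^{1/(k-1)}$ on $[p,p^k)$, i.e.\ precisely when $(\mathrm{ii})$ already holds. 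Your asymptotic route is circular, and the reflexivity equation alone (together with bounds obtained by letting a single variable tend to $1$) cannot break this circle: reflexivity places no constraint on $f_0$ beyond the gluing condition~\eqref{eq:Lfk_Reflexivity_Solution_Continuity}.

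The paper's idea is different and decisive. Instead of sending $n\to-\infty$ to probe $x\to1$, it substitutes $x_j=y_j^{k^{-n}}$ with $y_j\in[p,p^k)$ and lets $n\to+\infty$. Then both sides of the mean inequality, $y_1^{k^{-n}}$ and $y_k^{k^{-n}}$, tend to $1$, so the squeeze collapses the middle expression to an \emph{equality}:
\[
g\Bigl(\bigl(\textstyle\prod_j y_j\bigr)^{1/k}\Bigr)=\frac{1}{k}\sum_j g(y_j),\qquad g(y):=y^{1/(k-1)}f_0(y),
\]
for all $y_1,\ldots,y_k\in[p,p^k)$. This is a $k$-variable Jensen equation (after the substitution $y=e^s$), and Lemma~\ref{kJensen} together with the standard representation of Jensen functions pins down $g$, hence $f_0$, up to an additive function; positivity of $f$ then forces that additive function to be linear. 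The missing idea in your proposal is precisely this passage from the two-sided mean inequality to an exact functional equation via the limit $n\to+\infty$.
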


\begin{proof}
To prove the implication (i)$\Longrightarrow $(ii), assume that $L_{f,k}$ is
a mean in $\left( 1,+\infty \right) $. Fix arbitrarily $p>1$ and put $%
f_{0}:=f\mid _{_{\left[ p,p^{k}\right) }}.$ It follows from Remark \ref{PropertiesOfAMean} (ii) and Remark \ref{LfkReflexivity} that ,
the function $f:\left( 1,+\infty \right) \rightarrow \left( 0,+\infty \right) $
satisfies \eqref{eq:Lfk_Reflexivity}. Thus, by part (iii) of Lemma 2, for every $n\in \mathbb{Z},$

\begin{equation*}
f\left( x\right) =k^{-n}x^{^{\frac{k^{n}-1}{k-1}}}f_{0}\left(
x^{k^{n}}\right) ,\text{ \ \ \ \ \ }x\in \left[ p^{k^{-n}},p^{k^{-n+1}}%
\right).
\end{equation*}

Hence, for all $x_{1},\ldots ,x_{k}\in \left[ p^{k^{-n}},p^{k^{-n+1}}\right)
,$ we have 
\begin{equation*}
x_{1}\cdot \ldots \cdot x_{k}\in \left[ p^{k^{-(n-1)}},p^{k^{-(n-2)}}\right),
\end{equation*}%
and, by Definition \ref{def:Lfk}, for all $x_{1},\ldots ,x_{k}\in \left[
p^{k^{-n}},p^{k^{-n+1}}\right) ,$ 
\begin{eqnarray*}
L_{f,k}\left( x_{1},\ldots ,x_{k}\right) =
\frac{k^{-n}x_1^{\frac{k^{n-1}}{k-1}} f_0{(x_1^{k^n})}+\cdots +k^{-n}x_k^{\frac{k^{n-1}}{k-1}}f_0{(x_k^{k^n})}}{k^{-(n-1)}(x_1 \cdot \cdots \cdot x_k)^{\frac{k^{n-2}}{k-1}} f_0{((x_1 \cdot \cdots \cdot x_k)^{k^{n-1}})}}\\
=\frac{1}{k}\frac{%
\sum\limits_{j=1}^{k}x_{j}^{\frac{k^{n}-1}{k-1}}f_{0}\left( x_j^{k^{n}}\right) 
}{\left( \prod\limits_{j=1}^{k}x_{j}\right) ^{\frac{k^{n-2}}{k-1}%
}f_{0}\left( \left( \prod\limits_{j=1}^{k}x_{j}\right) ^{k^{n-1}}\right) }.
\end{eqnarray*}%
Since $L_{f,k}$ is a $k$-variable mean in the interval $I,\ $we have, for
all $x_{1},\ldots ,x_{k}\in \left[ p^{k^{-n}},p^{k^{-n+1}}\right) $,%
\begin{equation*}
\min \left( x_{1},\ldots ,x_{k}\right) \leq \frac{1}{k}\frac{%
\sum\limits_{j=1}^{k}x_{j}^{\frac{k^{n}-1}{k-1}}f_{0}\left( x_j^{k^{n}}\right) 
}{\left( \prod\limits_{j=1}^{k}x_{j}\right) ^{\frac{k^{n-2}}{k-1}%
}f_{0}\left( \left( \prod\limits_{j=1}^{k}x_{j}\right) ^{k^{n-1}}\right) }%
\leq \max \left( x_{1},\ldots ,x_{k}\right) .
\end{equation*}%
Choosing $y_{1},\ldots ,y_{k}\in \left[ p,p^{k}\right) ${\LARGE \ }%
arbitrarily, we have, for every $n\in \mathbb{Z}$, 
\begin{equation*}
x_{j}=y_{j}^{k^{-n}}\in \left[ p^{k^{-n}},p^{k^{-n+1}}\right) ~\text{\ \ \
for \ \ }j=1,\ldots ,k.
\end{equation*}%
Setting these numbers into the above inequalities, and, assuming that 
\begin{equation*}
y_{1}=\min \left( y_{1},...,y_{k}\right) \text{ \ \ \ and \ \ \ \ \ }%
y_{k}=\max \left( y_{1},...,y_{k}\right) ,
\end{equation*}%
(which can be done without any loss of generality), we get 
\begin{equation*}
y_{1}^{k^{-n}}\leq \frac{1}{k}\frac{\sum\limits_{j=1}^{k}y_{j}^{\frac{%
1-k^{-n}}{k-1}}f_{0}\left( y_{j}\right) }{\left(
\prod\limits_{j=1}^{k}y_{j}\right) ^{\frac{k^{-1}-k^{-n}}{k-1}}f_{0}\left(
\left( \prod\limits_{j=1}^{k}y_{j}\right) ^{k^{-1}}\right) }\leq
y_{k}^{k^{-n}},\text{ \ \ \ \ \ }y_{2},\ldots ,y_{k-1}\in \left[
p,p^{k}\right) .
\end{equation*}%
Letting here $n\rightarrow +\infty ,$ we obtain%
\begin{equation*}
1\leq \frac{1}{k}\frac{\sum\limits_{j=1}^{k}y_{j}^{\frac{1}{k-1}}f_{0}\left(
y_{j}\right) }{\left( \prod\limits_{j=1}^{k}y_{j}\right) ^{\frac{k^{-1}}{k-1}%
}f_{0}\left( \left( \prod\limits_{j=1}^{k}y_{j}\right) ^{k^{-1}}\right) }%
\leq 1,\text{ \ \ \ \ \ }y_{1},\ldots ,y_{k}\in \left[ p,p^{k}\right) ,
\end{equation*}%
whence, 
\begin{equation*}
\frac{1}{k}\sum\limits_{j=1}^{k}y_{j}^{\frac{1}{k-1}}f_{0}\left(
y_{j}\right) =\left( \left( \prod\limits_{j=1}^{k}y_{j}\right)
^{k^{-1}}\right) ^{\frac{1}{k-1}}f_{0}\left( \left(
\prod\limits_{j=1}^{k}y_{j}\right) ^{k^{-1}}\right) ,\ \ \ \ \ \
y_{1},\ldots ,y_{k}\in \left[ p,p^{k}\right) .
\end{equation*}%
Defining $g:\left[ a,a^{k}\right) \rightarrow \left( 0,+\infty \right) $ by%
\begin{equation*}
g\left( y\right) :=y^{\frac{1}{k-1}}f_{0}\left( y\right) ,\text{ \ \ \ \ \ }%
y\in \left[ p,p^{k}\right) ,
\end{equation*}%
we can write this equality as follows 
\begin{equation*}
g\left( \left( \prod\limits_{j=1}^{k}y_{j}\right) ^{k^{-1}}\right) =\frac{1}{%
k}\sum\limits_{j=1}^{k}g\left( y_{j}\right) ,\ \ \ \ \ \ y_{1},\ldots
,y_{k}\in \left[ p,p^{k}\right) .
\end{equation*}%
Since, for arbitrary $s_{j}\in \left[ \log p,\log p^{k}\right) ,$ $%
j=1,\ldots ,k,$ we have 
\begin{equation*}
y_{j}=e^{s_{j}}\in \left[ p,p^{k}\right) ,\text{ \ \ \ \ \ \ }j=1,\ldots ,k,
\end{equation*}%
we hence get 
\begin{equation*}
g\left( e^{\frac{1}{k}\left( s_{1}+\cdots +s_{k}\right) }\right) =\frac{1}{k}%
\left[ g\left( e^{s_{1}}\right) +\cdots +g\left( e^{s_{k}}\right) \right] ,\
\ \ \ \ \ s_{1},\ldots ,s_{k}\in \left[ \log p,\log{ p^{k}}\right) .
\end{equation*}%
Thus, the function 
\begin{equation*}
h:=g\circ \exp
\end{equation*}%
satisfies the Jensen functional equation 
\begin{equation*}
h\left( \frac{1}{k}\left( s_{1}+\cdots +s_{k}\right) \right) =\frac{1}{k}%
\left[ h\left( s_{1}\right) +\cdots +h\left( s_{k}\right) \right] ,\ \ \ \ \
\ s_{1},\ldots ,s_{k}\in \left[ \log p,\log {p^{k}}\right) .
\end{equation*}%
By \cite{Kuczma2}, p. 315, Theorem 1, and Lemma \ref{kJensen} there exists an additive function 
${a}:\mathbb{R}\rightarrow \mathbb{R}$ and $b\in \mathbb{R}$ such that%
\begin{equation*}
h\left( s\right) =\text{a}\left( s\right) +b,\text{ \ \ \ \ }\ s\in
\left[ \log a,\log a^{k}\right) .
\end{equation*}%
From the definitions of the functions $h,$ $g$ and $f_0$, we obtain
\begin{equation*}
g\left( y\right) =\text{a}\left( \log y\right) +b,\text{ \ \ \ \ \ }%
y\in \left[ p,p^{k}\right),
\end{equation*}%
and, using the $\mathbb{Q}$-homogeneity of the additive function $a$,
\begin{equation*}
f_{0}\left( y\right) =\frac{ a\left( \log{y} \right) +b }{y^{\frac{1}{k-1}}},\text{ \ \ \ }y\in \left[
p,p^{k}\right) .
\end{equation*}%
Hence, by Lemma 2 (iii), we have, for every $n\in \mathbb{Z},$ 
\begin{equation*}
f\left( x\right) =\frac{1}{x^{\frac{1}{k-1}}}\left( \text{a}\left(
\log x\right) +\frac{b}{k^{n}}\right) ,\text{ \ \ \ \ \ }x\in \left[
p^{k^{n}},p^{k^{n+1}}\right) .
\end{equation*}%
Setting this into equation \eqref{eq:Lfk_Reflexivity}, we get 
\begin{equation*}
\frac{1}{x^\frac{1}{k-1}} \left(a(\log{x})+\frac{b}{k^n}\right)=
\frac{1}{x^\frac{1}{k-1}} \left(a(\log{x})+\frac{b}{k^{n+1}}\right),
\end{equation*}
and thus
\begin{equation*}
b=0.
\end{equation*}%
Since $f$ is assumed to be positive, the function $a$ must be
continuous, i.e. there is $c>0$ such that%
\begin{equation*}
{a}\left( x\right) =cx,\text{ \ \ \ \ \ }x\in \mathbb{R}.
\end{equation*}%
Consequently, for every $n \in \mathbb{Z}$,%
\begin{equation*}
f\left( x\right) =\frac{c}{x^{\frac{1}{k-1}}}\log x,\text{ \ \ \ \ \ }x\in %
\left[ p^{k^{n}},p^{k^{n+1}}\right) .
\end{equation*}%
This proves the implication $(i)\Longrightarrow (ii).$

Assume $(ii)$ holds. Then, by Definition 1, we get, for all $x_{1},\ldots
,x_{k}\in \left( 1,+\infty \right) ,$%
\begin{eqnarray*}
L_{f,k}\left( x_{1},\ldots ,x_{k}\right) &=&\frac{\frac{c}{x_1^{\frac{1}{k-1}} }\log{x_1}+ \cdots + \frac{c}{x_k^{\frac{1}{k-1}}}\log{x_k}}{\frac{c}{(x_1 \cdots x_k)^\frac{1}{k-1} }\log{(x_1 \cdots x_k)}}\\
&=&
{(x_1 \cdots x_k)^\frac{1}{k-1} }
\frac{  \frac{\log{x_1}}{x_1^{\frac{1}{k-1}} }+ \cdots + \frac{\log{x_k}}{x_k^{\frac{1}{k-1}}} }{{\log{x_1}+ \cdots +\log{x_k}}}\\
&=&\frac{\sum\limits_{i=1}^{k}%
\left( \prod\limits_{j=1,j\neq i}^{k}x_{j}^{\frac{1}{k-1}}\right) \log x_{i}%
}{\sum\limits_{l=1}^{k}\log x_{l}}=\sum\limits_{i=1}^{k}\frac{\log x_{i}}{%
\sum\limits_{l=1}^{k}\log x_{l}}\left( \prod\limits_{j=1,j\neq
i}^{k}x_{j}\right) ^{\frac{1}{k-1}} \\
&=&\sum\limits_{i=1}^{k}\frac{\log x_{i}}{\sum\limits_{l=1}^{k}\log x_{l}}%
\mathcal{G}_{k-1}\left( x_{1},\ldots,x_{i-1},x_{i+1},\ldots,x_{k}\right) \\
&=&\mathcal{L}_{k}\left( x_{1},\ldots ,x_{k}\right) ,
\end{eqnarray*}%
where $\mathcal{L}_{k}:\left( 1,+\infty \right) ^{k}\rightarrow \left(
0,+\infty \right) $ is defined by formula \eqref{eq:Lfk_mean}, and $\mathcal{G}_{k-1}$ the 
$\left( k-1\right) $-variable geometric mean,%
\begin{equation*}
\mathcal{G}_{k-1}\left( x_{1},...,x_{i-1},x_{i+1},...,x_{k}\right) =\left(
\prod\limits_{j=1,j\neq i}^{k}x_{j}\right) ^{\frac{1}{k-1}}\text{, \ \ \ \ \ 
}i=1,...,{k}.
\end{equation*}%
For arbitrary $x_{1},\ldots ,x_{k}\in \left( 1,+\infty \right) $ put $x_{\min
}:=\min \left( x_{1},\ldots ,x_{k}\right) $ and $x_{\max }:=\left(
x_{1},\ldots ,x_{k}\right) $. Since 
\begin{eqnarray*}
x_{\min } &=&\sum\limits_{i=1}^{k}\frac{\log x_{i}}{\sum\limits_{l=1}^{k}%
\log x_{l}}x_{\min }\leq \sum\limits_{i=1}^{k}\frac{\log x_{i}}{%
\sum\limits_{i=l}^{k}\log x_{l}}\mathcal{G}_{k-1}\left(
x_{1},...,x_{i-1},x_{i+1},...,x_{k}\right) \\
&\leq &\sum\limits_{i=1}^{k}\frac{\log x_{i}}{\sum\limits_{l=1}^{k}\log x_{l}%
}x_{\max }=x_{\max },
\end{eqnarray*}%
we have $x_{\min }\leq \mathcal{L}_{k}\left( x_{1},\ldots ,x_{k}\right) \leq
x_{\max }$ (and these inequalities are strict if the $k$-tuple $\left(
x_{1},\ldots ,x_{k}\right) $ is not constant)\ which shows that $\mathcal{L}%
_{k}$ is a $k$-variable mean in $\left( 1,+\infty \right) $. Thus $%
(ii)\Longrightarrow (iii).$

The implication $(iii)\Longrightarrow (i)$ is obvious. This completes the
proof.
\end{proof}

\bigskip

\bigskip

In the context of Theorem 2 the natural question arises if it is
possible to extend the mean $\mathcal{L}_{k}$ onto $\left( 0,+\infty \right)
^{k}$. An answer gives the following

\begin{remark}
The function $\mathfrak{L}_{k}:\left( 0,+\infty \right) ^{k}\rightarrow
\left( 0,+\infty \right) $ defined by%
\begin{equation*}
\mathfrak{L}_{k}\left( x_{1},\ldots ,x_{k}\right) :=\left\{ 
\begin{array}{ccc}
\frac{\sum_{i=1}\mathcal{G}_{k-1}\left(
x_{1},...,x_{i-1},x_{i+1},...,x_{k}\right) \log x_{i}}{\sum_{l=1}^{k}\log x_{l}%
} & \text{if} & \left( x_{1},\ldots ,x_{k}\right) \in \left( 0,1\right)
^{k}\cup \left( 1,+\infty \right) ^{k} \\ 
1 & \text{if} & \left( x_{1},\ldots ,x_{k}\right) \notin \left( 0,1\right)
^{k}\cup \left( 1,+\infty \right) ^{k}%
\end{array}%
\right.
\end{equation*}%
is a $k$-variable mean in $\left( 0,+\infty \right) $, and it is the only
increasing extension of the means $\mathcal{L}_{k}:\left( 1,+\infty \right)
^{k}\rightarrow \left( 1,+\infty \right) $ and $\mathcal{L}_{k}:\left(
0,1\right) ^{k}\rightarrow \left( 0,1\right) .$
\end{remark}

\begin{proof}
By Theorem 2, the restriction $\mathfrak{L}_{k}|_{\left(
0,1\right) ^{k}}$ is a mean in $\left( 0,1\right) $, and $\mathfrak{L}%
_{k}|_{\left( 1,+\infty \right) ^{k}}$ is a mean in $\left( 1,\infty \right) 
$. \ If $\left( x_{1},...,x_{k}\right) \notin \left( \left( 0,1\right)
^{k}\cup \left( 1,+\infty \right) ^{k}\right) $ then,%
\begin{equation*}
\min \left( x_{1},...,x_{k}\right) \leq 1\leq \max \left(
x_{1},...,x_{k}\right) ,
\end{equation*}%
and, clearly, the number $1$ is the only possible value for an increasing
mean at such a point $\left( x_{1},...,x_{k}\right) .$

\end{proof}

To get an involutory counterpart of $\mathfrak{L}_{k},$ which could be
denoted by $\mathfrak{L}_{k}^{\limfunc{inv}}$, consider the following

\begin{remark}
Let $k\in \mathbb{N}$, $k\geq 2$. A function $M:\left( 1,+\infty \right)
^{k}\rightarrow \left( 1,+\infty \right) $ \verb|[| resp., $M:\left(
0,1\right) ^{k}\rightarrow \left( 0,1\right) $ \verb|]| is a $k$-variable mean in $(1,+\infty)$ \verb|[| resp. in $%
\left( 0,1\right) $ \verb|]| iff the function $M^{\limfunc{inv}}:\left( 0,1\right)
^{k}\rightarrow \left( 0,1\right) $ \verb|[| resp. $M^{\limfunc{inv}}:\left(
1,+\infty \right) ^{k}\rightarrow \left( 1,+\infty \right) $ \verb|]| defined by 
\begin{equation*}
M^{\limfunc{inv}}\left( x_{1},\ldots ,x_{k}\right) :=\frac{1}{M\left( \frac{1%
}{x_{1}},\ldots ,\frac{1}{x_{k}}\right) }
\end{equation*}%
is a $k$-variable mean in $\left( 0,1\right) $ \verb|[| resp. in $\left(
1,+\infty \right) $ \verb|]|$.$
\end{remark}

It easy to verify

\begin{remark}
The mean $\mathcal{L}_{k}^{\limfunc{inv}}:\left( 0,1\right) ^{k}\rightarrow
\left( 0,1\right) ,$ the involutory conjugate mean to $\mathcal{L}_{k},$ is
of the form%
\begin{equation*}
\mathcal{L}_{k}^{\limfunc{inv}}\left( x_{1},\ldots ,x_{k}\right) =\frac{%
\sum\limits_{i=1}^{k}\left( x_{i}\log x_{i}\right) \mathcal{G}_{k-1}\left(
x_{1},\ldots,x_{i-1},x_{i+1},\ldots,x_{k}\right) }{\sum\limits_{l=1}^{k}x_{l}\log x_{l}},%
\text{ \ \ \ \ \ }x_{1},\ldots ,x_{k}\in \left( 0,1\right) .
\end{equation*}
\end{remark}

Let us note some properties of the mean $\mathcal{L}_{k}$ in

\begin{proposition}
\begin{enumerate}
	\item[(i)] $\mathcal{L}_{k}$ is a symmetric and strict mean, but is neither
homogeneous nor translative.

\item[(ii)] $\mathcal{L}_{2}$ is the Beckenbach-Gini mean of generator $\log $,
i.e. 
\begin{equation*}
\mathcal{L}_{2}\left( x,y\right) =\frac{y\log x+x\log y}{\log x+\log y}\text{%
, \ \ \ \ \ }x,y \in (1,+\infty);
\end{equation*}%
and its involutory conjugate mean 
\begin{equation*}
\mathcal{L}_{2}^{\limfunc{inv}}\left( x,y\right) =xy\frac{\log x+\log y}{%
x\log x+y\log y}\text{, \ \ \ \ \ }x,y \in (0,1);
\end{equation*}

\item[(iii)] the bivariable geometric mean $\mathcal{G}$ is invariant with respect
to the mean-type mapping $\left( \mathcal{L}_{2}^{\limfunc{inv}},\mathcal{L}%
_{2}\right) $, i.e. $\mathcal{G\circ }\left( \mathcal{L}_{2}^{\limfunc{inv}},%
\mathcal{L}_{2}\right) =\mathcal{G}$, and the sequence $\left( \left( 
\mathcal{L}_{2}^{\limfunc{inv}},\mathcal{L}_{2}\right) ^{n}:n\in \mathbb{N}%
\right) $ of iterates of $\left( \mathcal{L}_{2}^{\limfunc{inv}},\mathcal{L}%
_{2}\right) $ converges uniformly on compact subsets of $\left( 1,+\infty
\right) ^{2}$ to $\left( \mathcal{G},\mathcal{G}\right) $ (see Theorem 1 in \cite{JM1999}%
). \ 
\end{enumerate}

\end{proposition}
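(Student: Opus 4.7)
The three assertions are essentially independent, and each reduces to direct calculation.

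For (i), symmetry is immediate from \eqref{eq:Lfkmean} since the index $i$ is summed over. I would prove strictness by rewriting $\mathcal{L}_k(x_1,\ldots,x_k)=\sum_{i=1}^{k}w_i\,\mathcal{G}_{k-1}^{(i)}$ as a convex combination with strictly positive weights $w_i:=\log x_i/\sum_l \log x_l$ (summing to $1$) of the partial geometric means $\mathcal{G}_{k-1}^{(i)}:=\bigl(\prod_{j\neq i}x_j\bigr)^{1/(k-1)}\in[\min_j x_j,\max_j x_j]$. This already gives the mean property of (i). For strictness on nonconstant tuples, observe that $\mathcal{G}_{k-1}^{(i)}=\min_j x_j$ for every $i$ forces $\prod_{j\neq i}x_j$ to be independent of $i$, hence all $x_i$ equal; the same argument applies to $\max$. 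Thus at least one term of the combination lies strictly inside $(\min,\max)$, so does the combination. Non-homogeneity and non-translativity then fall out of single numerical comparisons (for example, $\mathcal{L}_2(2,4)=8/3$ whereas $\mathcal{L}_2(4,8)=28/5\neq 16/3 = 2\mathcal{L}_2(2,4)$, and likewise $\mathcal{L}_2(3,5)\neq\mathcal{L}_2(2,4)+1$).

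For (ii), specialise \eqref{eq:Lfkmean} to $k=2$ and use $\mathcal{G}_1(z)=z$ to recover the stated closed form for $\mathcal{L}_2$; the expression for $\mathcal{L}_2^{\mathrm{inv}}$ then follows by substituting $(1/x,1/y)$ into that closed form, applying $\log(1/t)=-\log t$ to clear signs, and inverting, per the definition $M^{\mathrm{inv}}(x,y):=1/M(1/x,1/y)$.

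For (iii), the invariance $\mathcal{G}\circ(\mathcal{L}_2^{\mathrm{inv}},\mathcal{L}_2)=\mathcal{G}$ is equivalent to the multiplicative identity $\mathcal{L}_2(x,y)\,\mathcal{L}_2^{\mathrm{inv}}(x,y)=xy$ on $(1,+\infty)^2$, which I would check by direct algebraic manipulation of the two closed forms obtained in (ii). Granted invariance, the uniform convergence of the iterates $\bigl((\mathcal{L}_2^{\mathrm{inv}},\mathcal{L}_2)^n\bigr)$ to $(\mathcal{G},\mathcal{G})$ on compact subsets of $(1,+\infty)^2$ is a direct application of the Matkowski convergence theorem for continuous mean-type mappings in \cite{JM1999}, which guarantees convergence to the unique continuous invariant mean. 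The main subtlety is the strictness argument in (i); the rest is pure algebra or a quoted convergence result.
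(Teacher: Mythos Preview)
Your treatment of (i) and (ii) is correct and in fact more complete than the paper's: the paper only supplies a numerical example (with the pair $(2,3)$ in place of your $(2,4)$) to witness non-homogeneity and non-translativity of $\mathcal{L}_2$, and leaves symmetry, strictness, and the two closed forms in (ii) as implicit. Your convex-combination argument for strictness and your derivation of the formula for $\mathcal{L}_2^{\mathrm{inv}}$ are both sound.

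The genuine gap is in (iii). You propose to verify the invariance $\mathcal{G}\circ(\mathcal{L}_2^{\mathrm{inv}},\mathcal{L}_2)=\mathcal{G}$ by checking the product identity $\mathcal{L}_2(x,y)\,\mathcal{L}_2^{\mathrm{inv}}(x,y)=xy$. If you actually carry out that algebra with the formula for $\mathcal{L}_2^{\mathrm{inv}}$ obtained in (ii), you get
\[
\mathcal{L}_2(x,y)\,\mathcal{L}_2^{\mathrm{inv}}(x,y)
=\frac{y\log x+x\log y}{\log x+\log y}\cdot\frac{xy(\log x+\log y)}{x\log x+y\log y}
=xy\cdot\frac{y\log x+x\log y}{x\log x+y\log y},
\]
and the last fraction equals $1$ only when $x=y$ (it rearranges to $(y-x)(\log x-\log y)=0$). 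Concretely, $\mathcal{L}_2(2,4)=8/3$ and $\mathcal{L}_2^{\mathrm{inv}}(2,4)=12/5$, whose product is $32/5\neq 8$. So the ``direct algebraic manipulation'' you announce cannot succeed with the involutory conjugate $M^{\mathrm{inv}}(x,y)=1/M(1/x,1/y)$ from Remark~7. The identity you want \emph{does} hold for the $\mathcal{G}$-complementary mean $M^{\mathcal{G}}(x,y):=xy/M(x,y)$, and for degree-one homogeneous means the two notions coincide; but $\mathcal{L}_2$ is not homogeneous (that is precisely part (i)), so they differ here. Before invoking the convergence theorem from \cite{JM1999}, you need to sort out which companion mean is actually meant in (iii), because with $\mathcal{L}_2^{\mathrm{inv}}$ as defined in (ii) the invariance claim is false as stated. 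The paper itself gives no argument for (iii) beyond the citation, so there is nothing further to compare against.
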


\begin{example}
Indeed, for $k=2$, we have
\begin{equation*}
\mathcal{L}_{2}\left( 2,3\right) =\frac{3\log 2+2\log 3}{\log 2+\log 3}
=\frac{\log{72}}{\log{6}},
\end{equation*}
\begin{equation*}
\mathcal{L}_{2}\left( 2t,3t\right) =\frac{3t\log{ 2t}+2t\log{ 3t}}{\log {2t}+\log {3t}},
\end{equation*}
and
\begin{equation*}
\mathcal{L}_{2}\left( 2+t,3+t\right) =\frac{(3+t)\log{ (2+t)}+(2+t)\log{ (3+t)}}{\log {(2+t)}+\log {(3+t)}}.
\end{equation*}
Setting $t=2$, we  get $2\mathcal{L}_{2}\left( 2,3\right) =\frac{\log{144}}{\log 6} \neq \mathcal{L}_{2}\left( 4,6\right)=\frac{\log{5308416}}{\log{24}}$, and
$2+\mathcal{L}_{2}\left( 2,3\right)=\frac{\log{5184}}{6} \neq \mathcal{L}_{2}\left( 4,5\right)=\frac{\log{640000}}{\log{20}}$.
Thus $\mathcal{L}_{2}$ is neither homogeneous nor translative.
A similar argument gives (i) of Proposition 1.  
\end{example}

\section{A characterization of $\mathcal{L}_{k}$ with the aid of reflexivity
of $L_{f,k}$ and a special type of convexity of its generator}

Applying a generalized version of the Krull theorem on linear difference
equations (\cite{Krull}) given in Kuczma \cite{Kuczma} p. 114, Theorem 5.11), 
we give the following characterization of the
logarithmic Cauchy quotient mean $\mathcal{L}_{k}$.

\begin{theorem}
Let $k\in \mathbb{N}$, $k\geq 2,$ be fixed, and assume that $f:\left(
1,+\infty \right) \rightarrow \left( 0,+\infty \right) $ is differentiable and
such that the function $\log \circ f\circ \exp \circ \exp $ is convex. Then
the following conditions are pairwise equivalent:
\begin{enumerate}
	\item[(i)]
	\ the function $L_{f,k}$ is reflexive in $\left( 1,+\infty \right) ;$

\item[(ii)] \ there is $c>0$ such that 
$f$ is given by \eqref{eq:meanGenLfk} for all $x\in(1,+\infty)$;

\item[(iii)] $\ L_{f,k}=\mathcal{L}_{k}$.
\end{enumerate}

\end{theorem}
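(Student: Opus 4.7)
The plan is to traverse the cycle (ii) $\Rightarrow$ (iii) $\Rightarrow$ (i) $\Rightarrow$ (ii). For (ii) $\Rightarrow$ (iii) there is nothing new to prove: this is exactly the implication of the same name already established in the proof of Theorem 2, which is a direct substitution of $f(x)=c\log x/\sqrt[k-1]{x}$ into (\ref{eq:Lfk}) that algebraically collapses to (\ref{eq:Lfk_mean}), and which used no convexity of $\log\circ f\circ\exp\circ\exp$. The implication (iii) $\Rightarrow$ (i) is trivial, since $\mathcal{L}_k$ is a $k$-variable mean on $(1,+\infty)$ and every mean is reflexive by Remark \ref{PropertiesOfAMean}(ii).

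The real content is (i) $\Rightarrow$ (ii). Reflexivity of $L_{f,k}$ in $(1,+\infty)$, evaluated on the diagonal $x_1=\cdots=x_k=x$, immediately yields $kf(x)=x\,f(x^k)$, which is the iterative equation (\ref{eq:Lfk_Reflexivity}). I would then take logarithms to get $\log f(x)-\log f(x^k)=\log x-\log k$ and perform the double--exponential substitution $x=\exp\exp t$, so that $\log x=e^t$ and $x^k=\exp\exp(t+\log k)$. Setting $G:=\log\circ f\circ\exp\circ\exp$, the iterative equation is converted into the linear difference equation
\[
G(t+\log k)-G(t)=\log k-e^t,\qquad t\in\mathbb{R}.
\]
A direct check verifies that
\[
G_0(t):=t-\tfrac{e^t}{k-1}
\]
is a particular solution, and $G_0$ corresponds exactly to the candidate generator $f_0(x)=\log x/\sqrt[k-1]{x}$.

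The crux is then to invoke the generalized Krull theorem (Kuczma \cite{Kuczma}, Theorem 5.11, p.\,114), which gives uniqueness up to an additive constant for solutions of such a linear difference equation under the appropriate one--sided convexity assumption on the unknown function. The standing hypothesis that $G=\log\circ f\circ\exp\circ\exp$ is convex (together with the differentiability of $f$) is precisely the regularity needed to apply this theorem; it forces $G(t)=G_0(t)+b$ for some $b\in\mathbb{R}$. Undoing the substitution yields $f(x)=e^{b}\log x/\sqrt[k-1]{x}$, which is (ii) with $c:=e^{b}>0$.

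The main obstacle I anticipate is the bookkeeping needed to match the hypotheses of the chosen variant of Krull's theorem to our difference equation --- in particular, reconciling the convexity hypothesis on $G$ with the sign conventions in Kuczma's formulation (the right--hand side $\log k-e^t$ is itself concave), and verifying the asymptotic/monotonicity condition on the increments that typically accompanies the uniqueness statement. Once this is lined up, the uniqueness--up--to--constant conclusion closes the proof, and the positivity of $c$ follows from $f>0$.
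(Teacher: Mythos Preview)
Your approach is essentially the paper's: both perform the double-exponential substitution to convert reflexivity into the linear difference equation $h(\tau+\log k)=h(\tau)+\log k-e^{\tau}$ for $h=\log\circ f\circ\exp\circ\exp$, identify the explicit particular solution, and then invoke Krull's uniqueness theorem (Kuczma, Theorem~5.11) to pin $h$ down up to an additive constant. The one tactical difference is that the paper differentiates the difference equation once and applies Krull to $h'$ (with inhomogeneity $F(\tau)=-e^{\tau}$, noting $F$ is concave and $F(\tau+\log k)-F(\tau)\to 0$ as $\tau\to-\infty$) rather than to $h$ itself; the convexity bookkeeping you flag as the main obstacle is precisely where this extra differentiation enters, and you should expect to wrestle with the same sign-matching either way.
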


\begin{proof}
Assume (i). By Definition 1 and Remark 4, the function $f$ satisfies the
iterative functional equation:%
\begin{equation*}
f\left( x\right) =\frac{x}{k}f\left( x^{k}\right) ,\text{ \ \ \ \ \ }x \in (1,+\infty).
\end{equation*}%
Taking $\log $ on both sides gives us%
\begin{equation*}
\log f\left( x\right) =\log f\left( x^{k}\right) +\log x-\log k,\text{ \ \ \
\ \ }x\in (1,+\infty).
\end{equation*}%
Putting $t=\log{x}$ here we come to
the equivalent equality
\begin{equation*}
\log f\left( e^{t}\right) =\log f\left( e^{kt}\right) +t-\log k,\text{ \ \ \
\ \ }t\in (0,+\infty).
\end{equation*}%
Setting $g:\left( 0,+\infty \right) \rightarrow \mathbb{R}$, defined by%
\begin{equation*}
g=\log \circ f\circ \exp,
\end{equation*}%
we can write this equation in the form%
\begin{equation*}
g\left( t\right) =g\left( kt\right) +t-\log k\text{, \ \ \ \ \ }t\in (0,+\infty),
\end{equation*}%
that is%
\begin{equation*}
g\left( e^{\log t}\right) =g\left( e^{\log t+\log k}\right) +e^{\log t}-\log
k,\ \ \ \ \ t\in (0,+\infty).
\end{equation*}%
Setting $\tau =\log t$ we get 
\begin{equation*}
g\left( e^{\tau }\right) =g\left( e^{\tau +\log k}\right) +e^{\tau }-\log
k,\ \ \ \ \ \tau \in \mathbb{R}\text{,}
\end{equation*}%
and, consequently, the function $h:\mathbb{R\rightarrow R}$, defined by%
\begin{equation*}
h:=g\circ \exp =\log \circ f\circ \exp \circ \exp,
\end{equation*}%
satisfies the functional equation%
\begin{equation*}
h\left( \tau +\log k\right) =h\left( \tau \right) +\log k-e^{\tau },\ \ \ \
\ \tau \in \mathbb{R}\text{.}
\end{equation*}%
Differentiating both sides with respect to $\tau ,$ we obtain 
\begin{equation*}
h^{\prime }\left( \tau +\log k\right) =h^{\prime }\left( \tau \right)
-e^{\tau },\ \ \ \ \ \tau \in \mathbb{R}\text{.}
\end{equation*}%
Put%
\begin{equation*}
\text{\ }F\left( \tau \right) :=-e^{\tau },\text{ \ \ \ }\tau \in \mathbb{R}%
\text{.}
\end{equation*}%
Note that $F$ is concave, and%
\begin{equation*}
\lim_{\tau \rightarrow -\infty }\left[ F\left( \tau +\log k\right) -F\left(
\tau \right) \right] =\lim_{\tau \rightarrow -\infty }\left[ -e^{\tau +\log
k}-\left( -e^{\tau }\right) \right] =\lim_{\tau \rightarrow -\infty }\left[
e^{\tau }\left( -k+1\right) \right] =0.
\end{equation*}%
Therefore, in view of the theorem of Krull (\cite{Kuczma}, p. 114, Theorem
5.11), there exists exactly one, up to an additive constant, convex solution 
$h^{\prime }:\mathbb{R\rightarrow R}$ of the functional equation%
\begin{equation*}
h^{\prime }\left( \tau +\log k\right) =h^{\prime }\left( \tau \right)
+F\left( \tau \right) ,\ \ \ \ \ \tau \in \mathbb{R}\text{.}
\end{equation*}%
It is easy to verify that, if $f$ is given by formula \eqref{eq:meanGenLfk} in part (ii), then 
$h=\log \circ f\circ \exp \circ \exp $ satisfies this equation, as
\begin{equation*}
\log \circ f\circ \exp \circ \exp (\tau)=\log{c}+\tau -\frac{1}{k-1} e^{\tau}, \quad \tau \in \mathbb{R}.
\end{equation*}
Since $%
\left( \log \circ f\circ \exp \circ \exp \right) ^{\prime }$ is decreasing,
the function $\log \circ f\circ \exp \circ \exp $ is concave. 
Indeed, we have
\begin{equation}
(\log \circ f\circ \exp \circ \exp (\tau))^{\prime \prime} =-\frac{1}{k-1}e^{\tau}, \quad \tau \in \mathbb{R},
\end{equation}
implying the concavity of the function $\log \circ f\circ \exp \circ \exp $.
Thus we have
shown (ii). Since logarithmic Cauchy quotients for a given generator $f$ are
uniquely determined, the implication (ii)$\Rightarrow $(iii) follows. The
remaining implication is due to part (ii) of Remark 1. This finishes the
proof.
\end{proof}

Weakening the assumption on $\mathcal{L}_{f,k}$ while adding some regularity assumption on the generator $f$,
and making use of the idea applied in \cite{JM1972}, one gets the
following characterization of the logarithmic Cauchy mean. 

\begin{theorem}
Let $k\in \mathbb{N}$, $k\geq 2$ be fixed. Assume that $f:\left( 1,+\infty
\right) \rightarrow \left( 0,+\infty \right) $ is such that, for some $c>0,$
the function 
\begin{equation}
\left( 0,+\infty \right) \ni x\longmapsto \frac{f\left( x\right) -c\left(
x-1\right) }{\left( x-1\right) ^{2}}  \label{eq:BdLfk}
\end{equation}%
is bounded in a right vicinity of $1$.

Then the following conditions are pairwise equivalent
\begin{enumerate}
	\item[(i)] \ the function $L_{f,k}$ is reflexive in $\left( 1,+\infty \right) ;$

\item[(ii)] \ there is $c>0$ such that $f$satisfies \eqref{eq:meanGenLfk} for all $x\in (1,+\infty)$;

\item[(iii)] $\ L_{f,k}=\mathcal{L}_{k}$.
\end{enumerate}
\end{theorem}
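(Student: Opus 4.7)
The implication (iii)$\Rightarrow$(i) is immediate, and (ii)$\Rightarrow$(iii) follows from exactly the same substitution carried out in the proof of Theorem 2. The entire content therefore lies in (i)$\Rightarrow$(ii), and my plan is to combine the iterative consequence of reflexivity with the second-order control of $f$ at $1$ supplied by the boundedness hypothesis \eqref{eq:BdLfk}. Notice that the constant $c$ in (ii) is, in the end, forced to coincide with the one in \eqref{eq:BdLfk}: a Taylor expansion of $c\log x/\sqrt[k-1]{x}$ at $x=1$ gives $c(x-1)+O((x-1)^{2})$, so the hypothesis is consistent with, and pins down, the leading slope.

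First I would note, via Remark \ref{LfkReflexivity}, that reflexivity of $L_{f,k}$ forces $f$ to satisfy \eqref{eq:Lfk_Reflexivity} on $(1,+\infty)$. Applying part (iii) of Lemma 2 (iterated in the ``downward'' direction) yields the representation
\[
f(x)=k^{n}\,x^{\frac{k^{-n}-1}{k-1}}\,f\!\left(x^{k^{-n}}\right),\qquad x>1,\ n\in\mathbb{N}.
\]
The idea is now to let $n\to\infty$ and read off a closed form for $f$. The prefactor obeys $x^{(k^{-n}-1)/(k-1)}\to x^{-1/(k-1)}$ for every fixed $x>1$, so everything reduces to evaluating $\lim_{n\to\infty} k^{n}f(x^{k^{-n}})$.

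This is the step where the hypothesis is used. For fixed $x>1$ the argument $y:=x^{k^{-n}}\to 1^{+}$, so eventually $y$ lies in the right neighbourhood of $1$ where \eqref{eq:BdLfk} applies, giving $f(y)=c(y-1)+O\bigl((y-1)^{2}\bigr)$. Together with the Taylor expansion $x^{k^{-n}}-1=k^{-n}\log x+O(k^{-2n})$, this produces
\[
k^{n}f\!\left(x^{k^{-n}}\right)=c\,k^{n}\bigl(x^{k^{-n}}-1\bigr)+O\!\left(k^{n}\bigl(x^{k^{-n}}-1\bigr)^{2}\right)=c\log x+O(k^{-n}),
\]
which tends to $c\log x$. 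Substituting into the iterated identity yields $f(x)=c\log x/\sqrt[k-1]{x}$, establishing (ii).

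The delicate point I would handle most carefully is the asymptotic cancellation in the last display: the prefactor $k^{n}$ diverges while $f(x^{k^{-n}})$ vanishes, and only the \emph{quadratic} control of the error (not mere continuity or a first-order estimate of $f$ at $1$) forces the remainder to be $O(k^{-n})$ rather than only $o(1)$; a weaker hypothesis such as boundedness of $(f(y)-c(y-1))/(y-1)$ would not close the argument. Once this estimate is in hand, the rest is essentially bookkeeping.
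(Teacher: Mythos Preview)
Your argument is correct and, in fact, more direct than the paper's. Both proofs start from the iterated form of the reflexivity equation, but they diverge at the key step. The paper rewrites $f(x)=c(x-1)+\varphi(x)(x-1)^{2}$, substitutes into the equivalent equation $f(x)=\tfrac{k}{x^{1/k}}f(x^{1/k})$, and obtains a linear functional equation for $\varphi$; it then shows \emph{uniqueness} of bounded solutions by observing that the difference $\psi=|\varphi_{1}-\varphi_{2}|$ satisfies $\psi(x)\le\tfrac12\,\psi(x^{1/k})$ near $1$, which together with boundedness and $\alpha^{n}(x)\to1$ forces $\psi\equiv0$. The desired $f$ is then identified by checking that $c\log x/\sqrt[k-1]{x}$ is one such bounded solution. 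Your route bypasses this contraction/uniqueness machinery entirely: you take the iterated identity $f(x)=k^{n}x^{(k^{-n}-1)/(k-1)}f(x^{k^{-n}})$ and pass to the limit directly, using the second-order estimate to evaluate $\lim_{n\to\infty}k^{n}f(x^{k^{-n}})=c\log x$. This is shorter and more transparent; the paper's approach, on the other hand, is the classical functional-equations template (cf.\ \cite{JM1972}) and would adapt more readily to variants where the limiting function is not known in advance. Your closing remark that the \emph{quadratic} bound is exactly what makes $k^{n}(y-1)^{2}=O(k^{-n})$ is the heart of both proofs: in the paper it appears as the factor $kx^{-1/k}\bigl(\tfrac{x^{1/k}-1}{x-1}\bigr)^{2}\to\tfrac1k$ that produces the contraction constant $\tfrac12$.
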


\begin{proof}
From \eqref{eq:BdLfk} we have 
\begin{equation}
f\left( x\right) =c\left( x-1\right) +\varphi \left( x\right) \left(
x-1\right) ^{2}\text{, \ \ \ \ \ }x\in (1,+\infty),  \label{eq:f(x)}
\end{equation}%
where the function $\varphi :\left( 1,+\infty \right) \rightarrow \mathbb{R}$
defined by 
\begin{equation*}
\varphi \left( x\right) :=\frac{f\left( x\right) -c\left( x-1\right) }{%
\left( x-1\right) ^{2}}\text{, \ \ \ \ \ }x\in (1,+\infty)\text{,}
\end{equation*}%
is bounded in an interval $\left( 1,1+r\right) $, for some $r>0.$

Assume (i). In view of Remark 4, the generator $f$ of $L_{f,k}$ satisfies
the functional equation \eqref{eq:Lfk_Reflexivity},
that is equivalent to the functional equation%
\begin{equation}
f\left( x\right) =\frac{k}{x^{\frac{1}{k}}}f\left( x^{\frac{1}{k}}\right) 
\text{, \ \ \ \ \ \ }x \in (1, +\infty).
\label{eq:LfkReflexivity2}
\end{equation}%
Taking into account \eqref{eq:f(x)}, we conclude that $\varphi $  satisfies the
functional equation 
\begin{equation*}
c\left( x-1\right) +\varphi \left( x\right) \left( x-1\right) ^{2}=\frac{k}{%
x^{\frac{1}{k}}}\left[ c\left( x^{\frac{1}{k}}-1\right) +\left( x^{\frac{1}{k%
}}-1\right) ^{2}\varphi \left( x^{\frac{1}{k}}\right) \right] \text{, \ \ \
\ \ \ }x \in (1, +\infty),
\end{equation*}%

which can be written in the form

\bigskip

\begin{equation}
\varphi \left( x\right) =\frac{c\left( 1+k-x-kx^{-\frac{1}{k}}\right) }{%
\left( x-1\right) ^{2}}+kx^{-\frac{1}{k}}\left( \frac{x^{\frac{1}{k}}-1}{x-1}%
\right) ^{2}\varphi \left( x^{\frac{1}{k}}\right) ,  \label{eq:BoundGenLfk}
\end{equation}%
and, moreover, $\varphi $ is bounded in an interval $\left( 1,1+r\right) $.

Assume that the functions $\varphi _{1},\varphi _{2}:\left( 1,+\infty
\right) \rightarrow \mathbb{R}$ are bounded in $\left( 1,1+r\right) $ for
some $r>0$, and satisfy equation \eqref{eq:BoundGenLfk}, that is%
\begin{equation*}
\varphi _{i}\left( x\right) =\frac{c\left( 1+k-x-kx^{-\frac{1}{k}}\right) }{%
\left( x-1\right) ^{2}}+kx^{-\frac{1}{k}}\left( \frac{x^{\frac{1}{k}}-1}{x-1}%
\right) ^{2}\varphi _{i}\left( x^{\frac{1}{k}}\right) \text{, \ \ \ \ \ }x\in (1,+\infty);%
\text{ \ \ }i=1,2.
\end{equation*}%
Hence, putting 
\begin{equation*}
\psi :=\left\vert \varphi _{1}-\varphi _{2}\right\vert \text{ \ \ \ \ and \
\ \ \ }\alpha \left( x\right) :=x^{\frac{1}{k}}\text{ \ for \ }x\in (1,+\infty),\text{ \ }
\end{equation*}%
we see that $\psi $ is nonnegative and bounded solution of the functional
equation%
\begin{equation}
\psi \left( x\right) =kx^{-\frac{1}{k}}\left( \frac{x^{\frac{1}{k}}-1}{x-1}%
\right) ^{2}\psi \left( \text{\ }\alpha \left( x\right) \right) \text{, \ \
\ \ }x\in (1,+\infty).  \label{eq:BoundGenLfk!}
\end{equation}%
Note that

\begin{eqnarray*}
\frac{x^{\frac{1}{k}}-1}{x-1} &=&\frac{x^{\frac{1}{k}}-1}{\left( x^{\frac{1}{%
k}}\right) ^{k}-1}=\frac{x^{\frac{1}{k}}-1}{\left( x^{\frac{1}{k}}-1\right)
\left( \left( x^{\frac{1}{k}}\right) ^{k-1}+\left( x^{\frac{1}{k}}\right)
^{k-2}+\cdots+x^{\frac{1}{k}}+1\right) } \\
&=&\frac{1}{\left( x^{\frac{1}{k}}\right) ^{k-1}+\left( x^{\frac{1}{k}%
}\right) ^{k-2}+\cdots+x^{\frac{1}{k}}+1},
\end{eqnarray*}%
so, for all $x\in (1,+\infty)$, we have 
\begin{equation*}
kx^{-\frac{1}{k}}\left( \frac{x^{\frac{1}{k}}-1}{x-1}\right) ^{2}=\frac{kx^{-%
\frac{1}{k}}}{\left( \left( x^{\frac{1}{k}}\right) ^{k-1}+\left( x^{\frac{1}{%
k}}\right) ^{k-2}+\cdots+x^{\frac{1}{k}}+1\right) ^{2}}
\end{equation*}%
Hence 
\begin{equation*}
\lim_{x\rightarrow 1}kx^{-\frac{1}{k}}\left( \frac{x^{\frac{1}{k}}-1}{x-1}%
\right) ^{2}=\lim_{x\rightarrow 1}\frac{kx^{-\frac{1}{k}}}{\left( \left( x^{%
\frac{1}{k}}\right) ^{k-1}+\left( x^{\frac{1}{k}}\right) ^{k-2}+\cdots+x^{\frac{%
1}{k}}+1\right) ^{2}}=\frac{k}{k^{2}}=\frac{1}{k},
\end{equation*}%
and, as $k\geq 2$, \ there is $r>0$ such that%
\begin{equation*}
kx^{-\frac{1}{k}}\left( \frac{x^{\frac{1}{k}}-1}{x-1}\right) ^{2}\leq \frac{1%
}{2}\text{, \ \ \ \ \ \ }x\in \left( 1,1+r\right) .
\end{equation*}%
Since $\alpha \left( \left( 1,1+r\right) \right) \subset \left( 1,1+r\right) 
$, in view of \eqref{eq:BoundGenLfk!},%
\begin{equation*}
0\leq \psi \left( x\right) \leq \frac{1}{2}\psi \left( \text{\ }\alpha
\left( x\right) \right) \text{, \ \ \ \ }x\in \left( 1,1+r\right) ,
\end{equation*}%
the boundedness of $\psi $ implies that%
\begin{equation*}
\psi \left( x\right) =0\text{, \ \ \ \ \ \ }x\in \left( 1,1+r\right) .
\end{equation*}%
Now, from \eqref{eq:BoundGenLfk!}, taking into account that 
\begin{equation*}
\lim_{n\rightarrow \infty }\alpha ^{n}\left( x\right) =1\text{, \ \ \ \ }x\in (1,+\infty)%
\text{,}
\end{equation*}%
we conclude that $\psi \left( x\right) =0$ for every $x \in(0,+\infty)$ which shows that $%
\varphi _{1}=\varphi _{2}$. This proves that there is at most one solution
of equation \eqref{eq:LfkReflexivity2} satisfying condition \eqref{eq:BdLfk}.

Now the implication $(i)\Longrightarrow (ii)$ follows from the fact that the
function 
\begin{equation*}
\left( 1,+\infty \right) \ni x\longmapsto \frac{c\log x}{\sqrt[k-1]{x}},
\end{equation*}%
is a solution of the reflexivity equation \eqref{eq:LfkReflexivity2} and satisfies condition \eqref{eq:BdLfk}.

The remaining implications are obvious.
\end{proof}

Since twice continuously differentiable functions satisfy condition \eqref{eq:BdLfk}, the
following result is an immediate consequence of the above result.

\begin{corollary}
Let $k\in \mathbb{N}$, $k\geq 2$ be fixed. Assume that $f:\left( 1,+\infty
\right) \rightarrow \left( 0,+\infty \right) $ is of the class $C^{2}$ and
the function 
\begin{equation*}
\left( 1,+\infty \right) \ni x\longmapsto f\left( x\right)
\end{equation*}%
has an extension that is of the class $C^{2}$ in the interval $\left[
1,+\infty \right) $.

Then the following conditions are pairwise equivalent:
\begin{enumerate}
	\item[(i)] \ the function $L_{f,k}$ is a premean in $\left( 1,+\infty \right) ;$

\item[(ii)] \ there is $c>0$ such that 
$f$ satisfies \eqref{eq:meanGenLfk} for all $x\in (1,+\infty)$

\item[(iii)] $\ L_{f,k}=\mathcal{L}_{k}$.
\end{enumerate}

\end{corollary}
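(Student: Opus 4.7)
The plan is to derive this corollary directly from Theorem 4: it suffices to show that $C^2$ extendability of $f$ together with (i) forces the boundedness hypothesis \eqref{eq:BdLfk} for some $c>0$. Once that is established, Theorem 4 supplies (i)$\Rightarrow$(ii), the implication (ii)$\Rightarrow$(iii) is contained in Theorem 2, and (iii)$\Rightarrow$(i) is immediate since $\mathcal{L}_k$ is a mean and hence a premean. So assume (i). By Remark \ref{LfkReflexivity}, $f$ satisfies the reflexivity equation $f(x)=\frac{x}{k}f(x^k)$ on $(1,+\infty)$; passing to the limit $x\to 1^+$ in the $C^2$ extension gives $f(1)=f(1)/k$, hence $f(1)=0$. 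Taylor's theorem with the Lagrange remainder then yields
\begin{equation*}
f(x) = f'(1)(x-1) + \frac{1}{2} f''(\xi_x)(x-1)^2, \qquad \xi_x \in (1,x),
\end{equation*}
and continuity of $f''$ at $1$ makes it bounded on some $[1,1+r]$; setting $c:=f'(1)$, the quotient $\frac{f(x)-c(x-1)}{(x-1)^2}=\frac{1}{2}f''(\xi_x)$ is thereby bounded on $(1,1+r)$.

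To verify $c>0$ (the bound $c\ge 0$ being immediate from $f>0$ on $(1,+\infty)$ and $f(1)=0$), iterate the equivalent form $f(z)=\frac{k}{z^{1/k}}f(z^{1/k})$ to obtain, for every $n\in\mathbb{N}$,
\begin{equation*}
f(z) = \frac{k^n}{z^{(1-k^{-n})/(k-1)}}\, f(z^{k^{-n}}), \qquad z\in(1,+\infty).
\end{equation*}
As $n\to\infty$, the Taylor expansion combined with $z^{k^{-n}}-1=k^{-n}\log z + O(k^{-2n})$ yields $k^n f(z^{k^{-n}})\to c\log z$, so passing to the limit gives $f(z)=c\,z^{-1/(k-1)}\log z$. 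If $c=0$ this forces $f\equiv 0$, contradicting $f>0$ on $(1,+\infty)$; hence $c>0$, hypothesis \eqref{eq:BdLfk} holds, and Theorem 4 completes the proof.

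The principal obstacle is this last step: strict positivity of $c=f'(1)$ is essential for invoking Theorem 4, yet $C^2$ regularity alone admits the degenerate case $f'(1)=0$, and excluding it requires actively exploiting the reflexivity equation through the iterative limit rather than any pointwise property of $f$ at $1$.
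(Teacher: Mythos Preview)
Your proof is correct, and in fact more rigorous than the paper's own treatment. The paper simply asserts that ``twice continuously differentiable functions satisfy condition \eqref{eq:BdLfk}'' and declares the corollary an immediate consequence of Theorem~4, without addressing either why $f(1)=0$ or why the resulting constant $c=f'(1)$ is strictly positive. You correctly identify this second point as a genuine gap: $C^2$ extendability alone permits $f'(1)=0$, and ruling this out requires the iterative-limit argument you supply.

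One remark on presentation: your iterative argument does more than verify the hypothesis of Theorem~4 --- it already establishes (ii) outright, since you obtain $f(z)=c\,z^{-1/(k-1)}\log z$ for all $z>1$ with $c>0$. The closing appeal to Theorem~4 is therefore redundant; you could end the (i)$\Rightarrow$(ii) direction immediately after deducing $c>0$. This is not an error, merely an inefficiency: the route you took to justify the hypothesis of Theorem~4 turned out to be a direct proof of its conclusion.
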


\begin{acknowledgement}
The authors are greatly indebted to the referees for careful reading the
manuscript and giving valuable suggestions.
\end{acknowledgement}

\bigskip

\end{document}